\theoremstyle{definition}
\newtheorem{theorem}{Theorem}[section]
\DeclareMathOperator*{\argmax}{arg\,max}
\let\kron\otimes
\let\vec\bm
\newcommand{\oned}{\rm 1D}
\newcommand{\low}{\rm L}
\newcommand{\high}{\rm H}
\newcommand{\vol}{\rm vol}
\newcommand{\surf}{\rm surf}
\newcommand{\td}[2]{\frac{{\rm d}#1}{{\rm d}{ {#2}}}} 
\newcommand{\pd}[2]{\frac{\partial#1}{\partial#2}}
\newcommand{\nor}[1]{\left\| #1 \right\|} 
\newcommand{\LRp}[1]{\left( #1 \right)} 
\newcommand{\LRs}[1]{\left[ #1 \right]} 
\newcommand{\LRc}[1]{\left\{ #1 \right\}} 
\newcommand{\LRl}[1]{\left. #1 \right|} 
\newcommand{\fnt}[1]{\bm{\mathsf{ #1}}}
\newcommand{\rzero}[1]{{\color{black}{#1}}}
\newcommand{\rone}[1]{{\color{black}{#1}}}
\newcommand{\rtwo}[1]{{\color{black}{#1}}}
\title{High order entropy stable discontinuous Galerkin spectral element methods through subcell limiting} 
\author{Yimin Lin}
\author{Jesse Chan}
\address{Department of Computational and Applied Mathematics, Rice University, 6100 Main St, Houston, TX, 77005}
\begin{document}


\pagenumbering{arabic}

\begin{abstract}
    Subcell limiting strategies for discontinuous Galerkin spectral element
    methods do not provably satisfy a semi-discrete cell entropy inequality. In this
    work, we introduce an extension to the subcell \rtwo{and monolithic convex limiting strategies}~\rtwo{\cite{pazner2021sparse,rueda2022subcell,rueda2023monolithic}} that
    satisfies the semi-discrete cell entropy inequality by formulating the limiting
    factors as solutions to an optimization problem. The optimization problem is
    efficiently solved using a deterministic greedy algorithm. We also discuss
    the extension of the proposed subcell limiting strategy to preserve general
    convex constraints. Numerical experiments confirm that the proposed limiting
    strategy preserves high-order accuracy for smooth solutions and satisfies
    the cell entropy inequality.
\end{abstract}

\maketitle

\section{Introduction}\label{sec:intro}

In computational fluid dynamics simulations, higher resolutions are increasingly
necessary for a wide range of applications~\cite{slotnick2014cfd}. In certain
scenarios, high-order accurate numerical methods are preferred over low-order
methods due to their improved accuracy per degree of freedom, and comparable
efficiency~\cite{wang2013high}. High-order discontinuous Galerkin
(DG) methods are notably well-suited for handling convection-dominated problems
and yield simple and efficient implementations due to
the inherent locality of many operations~\cite{hesthaven2007nodal}. Among DG
methods, the DG spectral element method (DGSEM) is one of the most
computationally efficient high-order discretization techniques, due to
the tensor-product structure of the associated operators.

Unfortunately, high-order DGSEM often encounter stability issues when solving
nonlinear hyperbolic conservation laws. These issues arise due to the loss of
nonlinear stability and ill-defined physical quantities such as negative
density and pressure in compressible flows. Traditional stabilization
techniques, such as filtering and artificial
viscosity~\cite{krivodonova2007limiters,persson2006sub} are commonly employed
in combination with DGSEM. However, most of these methods require heuristic
tuning of parameters and lack provable robustness and high-order convergence for
smooth solutions.

For high-order DG schemes, Zhang, Shu, and their colleagues introduced simple,
effective, and high-order accuracy preserving scaling limiters for systems of
conservation laws~\cite{zhang2010maximum,zhang2010positivity,zhang2012minimum}.
The core idea is to utilize a strong stability preserving Runge-Kutta (SSPRK)
time integrator to compute the average of the DG solution on each element, which
satisfies desired properties under a timestep condition. The limited solution is
then constructed by scaling the high-order DG towards the DG average.

Another popular class of limiting strategies is flux-corrected transport (FCT)
algorithms~\cite{boris1973flux,zalesak1979fully}. Inspired by FCT, Kuzmin
designed algebraic flux correction (AFC) schemes, which provide a general
framework for designing multidimensional flux limiters~\cite{kuzmin2012flux}.
AFC schemes introduce a novel artificial diffusion operator and a conservative
flux decomposition to generalize the limiting technique of FCT algorithms. The
underlying low-order method is a generalization of the local Lax-Friedrichs
(LLF) method to nodal finite element
discretizations~\cite{kuzmin2012flux,guermond2016invariant}. A subcell limiting
strategy is then proposed to allow the limiting factors to vary within an
element in the context of high-order Bernstein finite
element~\cite{lohmann2017flux}. Pazner later extended this subcell limiting
strategy to the DGSEM in a dimension-by-dimension
fashion~\cite{pazner2021sparse}.

In the context of subcell limiting, there are two major approaches to ensure
some types of entropy inequality. One approach involves enforcing a discrete minimum principle
on specific entropy~\cite{pazner2021sparse,rueda2022subcell}. However, this
approach is limited to compressible flows and may reduce the accuracy of the
solution to at most second order for smooth solutions. Another recent approach
is to enforce Tadmor's entropy condition on subcell algebraic
fluxes~\cite{kuzmin2022limiter,rueda2023monolithic}.
This approach also does not preserve high-order accuracy for DGSEM\footnote{By private communication}.

This work aims to address the loss of high-order accuracy near smooth
regions when applying subcell limiter-based entropy stabilization. The main
motivation comes from high-order entropy stable Discontinuous Galerkin (ESDG)
discretizations \rone{and other schemes}, where a semi-discrete cell entropy balance is
satisfied~\cite{chan2018discretely,chan2019discretely,chan2022entropy,carpenter2014entropy,gassner2013skew,gassner2016split,chen2017entropy,abgrall2018general,abgrall2022reinterpretation}.
In contrast, the aforementioned entropy stabilization techniques enforce an
entropy stability inequality at the nodal level. The essential idea of the
proposed limiting strategy is to enforce the cell entropy balance through the
subcell limiting strategy, which can be formulated as a linear program over each
element. This linear program can be solved efficiently and optimally with
a simple greedy algorithm. Furthermore, the proposed entropy stabilization can
be easily extended to preserve more general convex constraints.

The outline of the paper is as follows: Section~\ref{sec:background} gives a
brief overview of the nonlinear conservation laws, the notations used in the
paper, and some background knowledge on DGSEM. Section~\ref{sec:subcell}
presents the semi-discrete entropy stable subcell limiting technique. In
Section~\ref{sec:exp}, we provide various numerical experiments in 1D and 2D to
verify the high-order convergence, entropy stability, and robustness of the
proposed limiting strategy. Finally, we summarize our work in
Section~\ref{sec:conclusion}.

\section{Background knowledge}~\label{sec:background}
In this work, we focus on solving the nonlinear hyperbolic conservation laws in
two space-dimensions, with the understanding that the theoretical findings
presented in this paper can be readily applied to three-dimensional settings.

\subsection{On notation}~\label{sec:notation}
We adopt the notation convention introduced in~\cite{chan2022entropy}. Lower and
upper case bold fonts (for example, $\vec{A}$ and $\vec{u}$) refer to vector and
matrix quantities, respectively. Spatially discrete
quantities are written in bold sans serif font (e.g., $\fnt{x}$). To
ensure clarity, continuous real functions evaluated over spatially discrete
quantities are interpreted as point-wise evaluations. For instance,

\begin{equation*}
    \fnt{x} = \begin{bmatrix}\vec{x}_1 \\ \vdots \\ \vec{x}_n\end{bmatrix},\qquad u: \mathbb{R} \rightarrow \mathbb{R}, \qquad u(\fnt{x}) = \begin{bmatrix} u(\vec{x}_1) \\ \vdots \\ u(\vec{x}_n)\end{bmatrix}
\end{equation*}

We note that we abuse notation and adopt the convention that $\fnt{A}\fnt{u}$
represents the Kronecker product $(\fnt{A}\kron \fnt{I}_{N^c})
\fnt{u}$~\cite{chen2017entropy}, where operator $\fnt{A}$ is applied to each
scalar component $\fnt{u}$. We will use either a number subscript
$\fnt{A}_1,\fnt{A}_2$, or a letter subscript $\fnt{A}_r$,$\fnt{A}_s$
interchangeably to indicate the coordinates of discrete operators. For the sake
of notational clarity, this work will present the theory on the reference
element $\widehat{D}$ and will ignore the geometric terms involved. For more
information on how the proposed framework can be applied to mapped elements and
curved meshes, readers can look into Appendix A of our recent
manuscript~\cite{lin2023positivity}.

For DG discretizations, let $u(\bm{x})$ be a scalar function on an element
$D$. We define $u$ as its ``interior'' and $u^+$ as its ``exterior'' values across
the face shared by neighbor $D^{+}$.

\subsection{Nonlinear conservation laws}~\label{sec:conslaw}
A $d$-dimensional nonlinear hyperbolic conservation law with $N_c$ components
is given by:
\begin{equation}\label{eq:conslaw}
    \pd{\vec{u}}{t} + \nabla \cdot \vec{f}\LRp{\vec{u}} = 0, \qquad \vec{u} \in \mathbb{R}^{N_c}, \qquad \vec{f}_i\LRp{\vec{u}}: \mathbb{R}^{N_c} \rightarrow \mathbb{R}^{N_c}, \quad i = 1,\dots,d.
\end{equation}
In particular, we are interested in systems with an associated convex
mathematical entropy $\eta\LRp{\vec{u}}$, \rone{whose physically relevant
solutions (defined as the limit solutions for an appropriately defined
vanishing viscosity) satisfy an entropy inequality}:
\begin{equation}\label{eq:esstate}
    \pd{\eta\LRp{\vec{u}}}{t} + \nabla\cdot \vec{F}\LRp{\vec{u}} \leq 0,
\end{equation}
where $\vec{F}\LRp{\vec{u}}$ is referred to as the entropy flux \rone{satisfying
the following identity:
\begin{equation}\label{eq:entropyfluxandvar}
  \LRp{\nabla_{\vec{u}} \vec{F}\LRp{\vec{u}}}^T = \vec{v}^T \nabla_{\vec{u}} \vec{f}\LRp{\vec{u}}, \qquad \vec{v} = \nabla_{\vec{u}} \eta\LRp{\vec{u}}.
\end{equation}
$\vec{v}$ is referred to as the entropy variables.} A cell entropy
balance is obtained by integrating the entropy
inequality~\eqref{eq:esstate} over a domain $D$ and applying integration by
parts and chain rule:
\begin{equation}\label{eq:integratedesstate}
    \int_{D} \pd{\eta\LRp{\vec{u}}}{t} + \int_{\partial D} \vec{v}^T \vec{f}\LRp{\vec{u}\LRp{\vec{v}}} - \vec{\psi}\LRp{\vec{v}} \leq 0,
\end{equation}
\rone{where $\vec{\psi}(\vec{v}) = \vec{v}^T \vec{f}(\vec{u}\LRp{\vec{v}}) - F(\vec{u}\LRp{\vec{v}})$
is referred to as the entropy potential.}

\subsection{Discretizations}~\label{sec:discretization}
The subcell limiting strategy is based on blending a high-order accurate
discretization and a low-order structure-preserving discretization constructed
using algebraic viscosity. In this section, we will provide a brief
introduction to the two types of discretizations that form the basis of the
proposed limiting strategy. We will restrict ourselves to 2D for simplicity of
presentations, but the idea is straightforward to extend to 3D. For both
discretizations, the domain $\Omega$ is decomposed into non-overlapping
quadrilateral elements $D^k$, each of which is the image of a reference element
$\hat{D}$ under an invertible mapping $\bm{\Phi}^k$. The reference approximation
basis of degree $N$ is defined as the Lagrange basis on Legendre-Gauss-Lobatto
(LGL) quadrature nodes
$\LRc{r_i}_{i=1}^{N+1}$:
\begin{align}\label{eq:lagrange}
    \phi_{i,j}\LRp{r,s} = \rone{L_i}\LRp{r}\rone{L_i}\LRp{s}, \qquad \rone{L_i}\LRp{r} = \prod\limits_{j \neq i} \frac{r-r_i}{r_j-r_i}.
\end{align}
In this work, we concentrate on Cartesian grids for the sake of simplicity. The
extension to curvilinear meshes is discussed in~\cite{pazner2021sparse}
and~\cite{lin2023positivity}.

\subsubsection{Discontinuous Galerkin spectral element discretization}\label{sec:DGSEM}
The DGSEM discretization refers to DG discretizations whose underlying
approximation basis is the Lagrange basis on LGL nodes and lumped mass matrix is
defined with LGL quadrature weights. Since the quadrature nodes collocate with
the interpolation nodes, the mass, differentiation, weighted differentiation,
boundary integration, and face extrapolation matrices are defined as
\begin{align}
    \fnt{M}_{\rm 1D} &= \begin{bmatrix} w_1 & & \\ & \ddots & \\ & & w_{N+1}\end{bmatrix}, \qquad \LRp{\fnt{D}_{\rm 1D}}_{ij} = \LRl{\td{\rone{L_j}}{r}}_{r=r_j},\qquad \fnt{Q}_{\rm 1D} = \fnt{M}_{\rm 1D}\fnt{D}_{\rm 1D}\label{eq:MDQ1D}\\
    \fnt{B}_{\rm 1D} &= \begin{bmatrix}-1 & 0 & \cdots & 0 & 0 \\ 0 & 0 & \cdots & 0 & 1\end{bmatrix},\qquad \fnt{E}_{\rm 1D} = \begin{bmatrix}1 & 0 & \cdots & 0 & 0 \\ 0 & 0 & \cdots & 0 & 1\end{bmatrix}, \label{eq:BE1D}
\end{align} 
where $\LRc{w_i}_{i=1}^{N+1}$ are the LGL quadrature weights.

Multidimensional operators are defined based on the tensor product structure of
the reference approximation basis as follows 
\begin{align}
    \fnt{M} &= \fnt{M}_{\rm 1D} \kron \fnt{M}_{\rm 1D},\qquad \fnt{E} = \fnt{I}_{N+1} \kron \fnt{E}_{\rm 1D}\label{eq:ME}\\
    \fnt{Q}_r &= \fnt{M}_{\rm 1D} \kron \fnt{Q}_{\rm 1D},\qquad  \fnt{Q}_s = \fnt{Q}_{\rm 1D} \kron \fnt{M}_{\rm 1D}, \label{eq:Qrs}\\
    \fnt{B}_r &= \fnt{M}_{\rm 1D} \kron \fnt{B}_{\rm 1D},\qquad  \fnt{B}_s = \fnt{B}_{\rm 1D} \kron \fnt{M}_{\rm 1D}. \label{eq:Brs}
\end{align}
\rone{Since the mass matrix $\fnt{M}$ is diagonal, we define $\fnt{m}_i = \fnt{M}_{ii}$
for simplicity of notation.}

We can then write the discontinuous Galerkin spectral element discretization on
the reference element as: 
\begin{align}
    &\fnt{M}\td{\fnt{u}}{t} + \sum\limits_{k=1}^2\fnt{Q}_k \fnt{f}_k + \fnt{E}^T\fnt{B}_k \LRp{\fnt{f}_k^*\LRp{\fnt{u}_f,\fnt{u}_f^+} - \vec{f}_k\LRp{\fnt{u}_f}} = 0,\label{eq:nodalDG}\\
    &\fnt{f}_k = \vec{f}_k\LRp{\fnt{u}}, \qquad \fnt{u}_f = \fnt{E} \fnt{u}, \nonumber
\end{align}
where $\fnt{u}_f^+$ denotes the interface value at the neighboring element
interface. Readers can refer to \rone{Chapter 6 of}~\cite{hesthaven2007nodal} for its extention to
multiple elements.

\subsubsection{Low order semi-discrete entropy stable and positivity-preserving discretization}\label{sec:lowpp}
We then introduce a low-order discretization that preserves semi-discrete
entropy stability and the positivity of physical quantities, utilizing an
appropriate time integrator. To avoid over-dissipation when the approximation
degree $N$ increases, we employ sparse low-order
operators~\cite{pazner2021sparse}. Sparse low-order operators on LGL nodes are
derived by integrating the piecewise linear basis over
subcells~\cite{pazner2021sparse}:
\begin{align}
    \fnt{Q}^{\low}_{\rm 1D} = \begin{bmatrix}-\frac{1}{2} & \frac{1}{2} & & \\-\frac{1}{2} & 0 & \frac{1}{2} & \\ & -\frac{1}{2} & 0 & \frac{1}{2} & \\ & & & \ddots\end{bmatrix}, \qquad \fnt{Q}_r^{\low} = \fnt{I}_{N+1} \kron \fnt{Q}_{\oned}^{\low}, \qquad \fnt{Q}_s^{\low} = \fnt{Q}_{\oned}^{\low} \kron \fnt{I}_{N+1}. \label{eq:Qrslow}
\end{align}
Then the low order discretization can be written
as~\cite{lin2023positivity}~\footnote{We
didn't split $\fnt{\Lambda}$ along dimension $k$ in~\cite{lin2023positivity}. }:
\begin{align}\label{eq:lowpp}
    \fnt{M}\td{\fnt{u}}{t} &+ \sum\limits_{k=1}^2\LRp{\LRp{\fnt{Q}_k^{\low} - {\fnt{Q}_k^{\low}}^T}\circ \fnt{F}_k} \bm{1} - \LRp{\fnt{\Lambda}_k \circ \fnt{D}} \bm{1} + \fnt{E}^T\fnt{B}_k \fnt{f}_k^*\LRp{\fnt{u}_f,\fnt{u}_f^+} = 0\\
    \LRp{\fnt{F}_k}_{ij} &= \frac{1}{2}\LRp{\vec{f}_k\LRp{\fnt{u}_i} + \vec{f}_k\LRp{\fnt{u}_j}},\qquad \fnt{D}_{ij} = \fnt{u}_j - \fnt{u}_i,\qquad
    \fnt{\Lambda}_{k,ij} = \frac{1}{2}\nor{\fnt{n}_{k,ij}} \lambda_{\max}\LRp{\fnt{u}_i, \fnt{u}_j, \frac{\fnt{n}_{k,ij}}{\nor{\fnt{n}_{k,ij}}}}, \nonumber\\
    \fnt{n}_{r,ij} &= \begin{bmatrix}\LRp{\fnt{Q}_r^{\low} - {\fnt{Q}_r^{\low}}^T}_{ij} \\ 0\end{bmatrix}, \qquad \fnt{n}_{s,ij} = \begin{bmatrix}0 \\ \LRp{\fnt{Q}_s^{\low} - {\fnt{Q}_s^{\low}}^T}_{ij}\end{bmatrix}\nonumber
\end{align}
The low-order discretization can be interpreted as a finite volume scheme with a
local Lax-Friedrichs type flux on subcells induced by LGL nodes. This
discretization satisfies a semi-discrete entropy inequality~\cite{lin2023positivity}~\footnote{Thoerem
6.1~\cite{lin2023positivity} proves the semi-discrete entropy stability of the
low order discretization rather than a fully discrete entropy stability.}.
If time integration is performed using the strong stability preserving
Runge-Kutta (SSP-RK) method, where the solution at the next time step is a
convex combination of forward Euler updates, then, under a suitable time step
condition, the combination of the low-order discretization and SSP time
integrator is proven to be both positivity preserving and entropy stable, as
demonstrated in~\cite{lin2023positivity}.

In this work, we assume the numerical fluxes of both
discretizations~\eqref{eq:nodalDG} and~\eqref{eq:lowpp} are both local
Lax-Friedrichs fluxes:
\begin{equation}
    \fnt{f}_k^*\LRp{\fnt{u}_f,\fnt{u}_f^+,\widehat{\fnt{n}}} = \frac{1}{2}\LRs{\vec{f}\LRp{\fnt{u}_f}+\vec{f}\LRp{\fnt{u}_f^+}} - \frac{\lambda_{\max}\LRp{\fnt{u}_f,\fnt{u}_f^+,\widehat{\fnt{n}}}}{2} \widehat{\fnt{n}}\LRs{\fnt{u}_f^+ - \fnt{u}_f},\label{eq:LFflux}
\end{equation}
where $\lambda_{\max}\LRp{\fnt{u}_f,\fnt{u}_f^+,\widehat{\fnt{n}}}$ is defined
as the maximum wavespeed associated with the 1D Riemann problem. 

\section{An entropy stable subcell limiting strategy}~\label{sec:subcell}
In this section, we present the main contribution of this paper. We begin with
Section~\ref{sec:subcell1D}, where we introduce the core idea in 1D.
Specifically, we present a linear program (LP) formulation for determining
optimal subcell limiting parameters. In Section~\ref{sec:subcell2D}, we extend
the subcell limiting strategy to higher dimensions. We discuss the adaptation of
the proposed limiting strategy as a shock capturing strategy in
Section~\ref{sec:subcelles}. Additionally, we discuss efficient and robust
implementation techniques for the proposed limiting strategy in
Section~\ref{sec:implementation}.

\subsection{An entropy stable subcell limiting strategy in 1D}~\label{sec:subcell1D}
In this section, we will illustrate the core idea of the proposed limiting
strategy on the 1D reference element. The 1D algebraic subcell flux form of the
DGSEM and the low order updates are defined as follows:
\begin{alignat}{2}\label{eq:1Dsubcell}
  &\fnt{m}_i \td{\fnt{u}^{\high}_i}{t} = \fnt{r}_i^{\high} = \bar{\fnt{f}}_i^{\high} - \bar{\fnt{f}}_{i-1}^{\high}, \qquad &&\bar{\fnt{f}}_i^{\high} = \sum\limits_{j=1}^{i} \fnt{r}_j^{\high},\\
  &\fnt{m}_i \td{\fnt{u}^{\low}_i}{t} = \fnt{r}_i^{\low} = \bar{\fnt{f}}_i^{\low} - \bar{\fnt{f}}_{i-1}^{\low}, \qquad     &&\bar{\fnt{f}}_i^{\low}  = \sum\limits_{j=1}^{i} \fnt{r}_j^{\low},\qquad i = 1,\dots,N+1\\
  &\bar{\fnt{f}}_0^{\high} = \bar{\fnt{f}}_0^{\low} = -\fnt{f}^*\LRp{\fnt{u}_1,\fnt{u}_1^+}, \qquad &&\bar{\fnt{f}}_{N+1}^{\high} = \bar{\fnt{f}}_{N+1}^{\low} = -\fnt{f}^*\LRp{\fnt{u}_{N+1},\fnt{u}_{N+1}^+},\label{eq:1Dsubcellalgfluxboundary}
\end{alignat}
where $\fnt{u}^{\high}_i$ and $\fnt{u}^{\low}_i$ denote the DGSEM and the low
order update at node $i$, respectively~\footnote{Note that the proposed limiting
strategy can also be applied to entropy stable discontinuous Galerkin
discretizations by utilizing an alternative definition of the high-order
residual $\fnt{r}_i^{\high}$, as illustrated by equation (35)
in~\cite{lin2023positivity}.}. It should be noted that the
equalities~\eqref{eq:1Dsubcellalgfluxboundary} hold because the DGSEM and low
order updates are both locally conservative~\cite{pazner2021sparse} and use the
same local Lax-Friedrichs fluxes at cell interfaces. The subcell limited
solution can then be written as
\begin{align}
  \fnt{m}_i \td{\fnt{u}_i}{t} &= \underbrace{\LRs{l_i \bar{\fnt{f}}_i^{\high} + \LRp{1-l_i} \bar{\fnt{f}}_i^{\low}}}_{\bar{\fnt{f}}_i} - \underbrace{\LRs{l_{i-1} \bar{\fnt{f}}_{i-1}^{\high} + \LRp{1-l_{i-1}} \bar{\fnt{f}}_{i-1}^{\low}}}_{\bar{\fnt{f}}_{i-1}} ,\qquad i = 1,\dots,N+1,\label{eq:1Dsubcelllimsol}
\end{align}
where $l_i \in \LRs{0,1}$ are referred to as the subcell limiting factors,\rtwo{
and limited algebraic subcell fluxes $\bar{\fnt{f}}_i$ are convex
combinations of low-order and high-order algebraic fluxes. \eqref{eq:1Dsubcelllimsol}
is referred to as monolithic scheme~\cite{hajduk2021monolithic,rueda2023monolithic},
where we determine appropriate subcell limiting factors to ensure the numerical
solution to remain in an admissble convex set.}

\subsubsection{Subcell limiting for cell entropy stability}~\label{sec:subcellcelles}
The ESDG discretization ensures a semi-discrete cell entropy balance, while
preserving high-order accuracy for smooth
solutions~\cite{chan2019discretely,carpenter2014entropy}. This motivates us to
consider the cell entropy inequality as a sufficient condition for achieving both
entropy stability and high-order accuracy. Furthermore, the subcell limiting
approach~\eqref{eq:1Dsubcelllimsol} enables the use of spatially varying limiting
factors within a DG element, which is essential for enforcing the entropy
stability condition.

We will now focus on the technical details of enforcing the cell entropy
inequality using the subcell limiting approach~\eqref{eq:1Dsubcelllimsol}.  The
subcell limited solution can be decomposed into two components: contributions
with numerical fluxes, referred to as surface contributions, and contributions
without numerical fluxes, referred to as volume contributions:
\begin{align}\label{eq:qvolsurf}
  \fnt{M} \td{\fnt{u}}{t} = \bar{\fnt{q}}^{\vol} + \bar{\fnt{q}}^{\surf} = \begin{bmatrix}\bar{\fnt{f}}_1 \\ \bar{\fnt{f}}_2 -  \bar{\fnt{f}}_1 \\ \vdots \\ \bar{\fnt{f}}_{N} -  \bar{\fnt{f}}_{N-1} \\ - \bar{\fnt{f}}_{N}\end{bmatrix} + \begin{bmatrix}-\bar{\fnt{f}}_0 \\ 0 \\ \vdots \\ 0 \\ \bar{\fnt{f}}_{N+1}\end{bmatrix}.
\end{align}
If the subcell limited solution satisfies a cell entropy inequality~\cite{chan2018discretely}, the
resulting entropy estimate can also be divided into volume and surface
contributions:
\begin{align}\label{eq:esstatement1D}
  \fnt{v}^T \fnt{M} \td{\fnt{u}}{t}
  &\leq \underbrace{\LRs{\fnt{\psi}\LRp{\fnt{u}_{N+1}} - \fnt{\psi}\LRp{\fnt{u}_1}}}_{\fnt{P}^{\vol}} - \underbrace{\LRs{\fnt{v}_{N+1}^T \fnt{f}^*\LRp{\fnt{u}_{N+1}, \fnt{u}_{N+1}^+}-\fnt{v}_1^T \fnt{f}^*\LRp{\fnt{u}_1, \fnt{u}_1^+}}}_{\fnt{P}^{\surf}}.
\end{align}
This observation enables us to enforce the cell entropy inequality separately on
the volume and surface contributions:
\begin{align}
  \fnt{v}^T\bar{\fnt{q}}^{\vol} \leq \fnt{P}^{\vol}, \qquad \fnt{v}^T\bar{\fnt{q}}^{\surf} \leq -\fnt{P}^{\surf}
\end{align}
The following identity holds for the surface contribution:
\begin{align}\label{eq:surfcontribution}
  \fnt{v}^T \rone{\bar{\fnt{q}}^{\surf}} = - \fnt{v}_1^T \bar{\fnt{f}}_0 + \fnt{v}_{N+1}^T \bar{\fnt{f}}_{N+1} = \fnt{v}_{N+1}^T \fnt{f}^*\LRp{\fnt{u}_{N+1}, \fnt{u}_{N+1}^+}-\fnt{v}_1^T \fnt{f}^*\LRp{\fnt{u}_1, \fnt{u}_1^+} = -\fnt{P}^{\surf},
\end{align}
since the limited algebraic surface flux does not depend on 
limiting factors by~\eqref{eq:1Dsubcellalgfluxboundary}. Therefore, the only
step left is to enforce entropy stability using the volume contribution. In
other words, we want to find limiting factors $l_1,\dots,l_N$ that satisfy:
\begin{align}\label{eq:es1D}
  \fnt{v}^T \bar{\fnt{q}}^{\vol} = \fnt{v}^T\begin{bmatrix}\bar{\fnt{f}}_1\LRp{l_1} \\ \bar{\fnt{f}}_2\LRp{l_2} -  \bar{\fnt{f}}_1\LRp{l_1} \\ \vdots \\ \bar{\fnt{f}}_{N}\LRp{l_N} -  \bar{\fnt{f}}_{N-1}\LRp{l_{N-1}} \\ - \bar{\fnt{f}}_{N}\LRp{l_N}\end{bmatrix} = \sum\limits_{i=1}^N \LRp{\fnt{v}_i - \fnt{v}_{i+1}}^T \bar{\fnt{f}}_i\LRp{l_i}\leq \fnt{P}^{\vol},
\end{align}
where we abused the notation to emphasize that $\bar{\fnt{f}}_i$ has a linear
dependence on the limiting factor $l_i$:
\begin{align}\label{eq:fbarli}
  \bar{\fnt{f}}_i = l_i \bar{\fnt{f}}_i^{\high} + \LRp{1-l_i} \bar{\fnt{f}}_i^{\low}
\end{align}

In addition to ensuring entropy stability, it is often desirable to preserve
general convex constraints on the solutions, such as the positivity of
thermodynamic quantities~\cite{lin2023positivity} or TVD-like
bounds~\cite{rueda2022subcell}. Let $l^{\rm C}$ denote the limiting
factors that preserve the convex constraints. \rone{Readers should refer to~\cite{rueda2023monolithic}
for formulas for subcell limiting factors under different convex constraints. We assume each subcell
limiting factor satisfies the bound $l_i^C \in \LRs{0,1}$.} The objective is to find a subcell
limited solution that preserves the convex constraints while maintaining
discrete entropy stability~\eqref{eq:es1D}. We aim to minimize the difference
between the subcell limited solution and the DGSEM discretization to preserve
high-order accuracy. Mathematically, the problem of finding suitable subcell
limiting factors can be formulated as a linear program:
\begin{subequations}
\begin{align}
    \max\limits_{l_{i}} \quad & \sum\limits_{i=1}^{N} l_{i} \label{eq:1DLPobjective}\\
    \textrm{s.t.}             \quad & \sum\limits_{i=1}^N \LRp{\fnt{v}_i - \fnt{v}_{i+1}}^T \bar{\fnt{f}}_i\LRp{l_i}\leq \fnt{P}^{\vol}\label{eq:constraintes1D}\\
                              \quad & 0 \leq l_i \leq l^{\rm C}_i\label{eq:constraintconvex1D}
\end{align}\label{eq:1DLP}
\end{subequations}

\rone{
The constraints~\eqref{eq:constraintes1D} and~\eqref{eq:constraintconvex1D}
correspond to semi-discrete entropy stability and convex constraints,
respectively, and are linear with respect to the subcell limiting factors.
In~\eqref{eq:1DLPobjective}, we define the linear objective function as the
sum of subcell limiting factors. As a result, the optimization
problem~\eqref{eq:1DLP} is a linear program, and its solution can be efficiently
obtained. Maximizing the objective function~\eqref{eq:1DLPobjective} is equivalent to
finding the set of subcell limiting factors as close to $l^C$ as possible. This
ensures that the amount of limiting applied is as small as possible while satisfying
entropy stability constraints.
}

The optimal solution of the linear program satisfies the following properties,
as stated in Theorem~\ref{thm:subcell}. It is important to note that the 
optimal solution in this context refers to a subcell limited solution that
utilizes the optimal solution of the linear program as its limiting factors.
\begin{theorem}\label{thm:subcell}
  The linear program~\ref{eq:1DLP} is solvable, and the optimal solution to
  the linear program is locally conservative,
  satisfies a semi-discrete entropy stability~\eqref{eq:esstatement1D}, and preserves
  the convex constraints enforced by $l^{\rm C}$.
\end{theorem}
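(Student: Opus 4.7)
The plan is to verify the four assertions in turn using only the algebraic structure of the flux formulation and the previously established properties of the low-order discretization. First I would establish LP solvability by exhibiting an explicit feasible point: the choice $l_i = 0$ for all $i$ reduces each limited flux $\bar{\fnt{f}}_i$ to the low-order flux $\bar{\fnt{f}}_i^{\low}$, so the resulting update coincides with the low-order discretization~\eqref{eq:lowpp}. Because that scheme satisfies the semi-discrete cell entropy inequality, the volume bound~\eqref{eq:constraintes1D} holds at $l \equiv 0$, and~\eqref{eq:constraintconvex1D} is trivial since $l_i^C \geq 0$. The feasible set is therefore a nonempty, closed, bounded subset of $[0,1]^N$, so the linear objective attains its maximum (in fact at a vertex).

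Next, for local conservation, I would observe that~\eqref{eq:1Dsubcelllimsol} is already in telescoping flux-difference form $\bar{\fnt{f}}_i - \bar{\fnt{f}}_{i-1}$, with the endpoint values $\bar{\fnt{f}}_0, \bar{\fnt{f}}_{N+1}$ fixed by~\eqref{eq:1Dsubcellalgfluxboundary} to the surface numerical fluxes independently of the limiting factors. Summing the nodal equations over $i$ telescopes and leaves only the two boundary contributions, which is exactly local conservation. This step is essentially a structural observation: since the update is built as a difference of limited subcell fluxes in which only the boundary fluxes survive summation, the property is inherited from the very form of~\eqref{eq:1Dsubcelllimsol}.

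For semi-discrete entropy stability, I would left-multiply~\eqref{eq:qvolsurf} by $\fnt{v}^T$ and split into volume and surface contributions. The surface contribution equals $-\fnt{P}^{\surf}$ by identity~\eqref{eq:surfcontribution}, which is available precisely because the endpoint limited fluxes are independent of $l$; the volume contribution is bounded above by $\fnt{P}^{\vol}$ by the LP constraint~\eqref{eq:constraintes1D}. Summing the two reproduces~\eqref{eq:esstatement1D}. Preservation of the convex constraints then follows immediately from the box constraint $l_i \leq l_i^C$ together with the inherited property of the underlying FCT-style construction (see~\cite{rueda2023monolithic}): any $l_i \in [0, l_i^C]$ yields an admissible update, because reducing $l_i$ only mixes in more of the already-admissible low-order contribution.

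The main subtlety is feasibility: it must be checked that the cell entropy inequality for the low-order scheme, stated at the cell level in~\cite{lin2023positivity}, indeed certifies the specific volume bound~\eqref{eq:constraintes1D} when $l \equiv 0$. Concretely, one has to unwind that the cell entropy balance for the low-order update splits into the same volume and surface pieces as in~\eqref{eq:esstatement1D}, so that after subtracting the surface identity~\eqref{eq:surfcontribution} one is left exactly with $\fnt{v}^T \bar{\fnt{q}}^{\vol,\low} \leq \fnt{P}^{\vol}$. Once this reduction is in place, every remaining assertion is a direct consequence of the algebraic structure of~\eqref{eq:1Dsubcelllimsol} and the LP constraints.
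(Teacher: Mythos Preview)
Your proposal is correct and follows essentially the same approach as the paper: feasibility via $l\equiv 0$ reducing to the entropy-stable low-order scheme, local conservation via telescoping of~\eqref{eq:1Dsubcelllimsol} with fixed boundary fluxes~\eqref{eq:1Dsubcellalgfluxboundary}, and the remaining two properties read off directly from the LP constraints~\eqref{eq:constraintes1D}--\eqref{eq:constraintconvex1D}. The paper handles the subtlety you flag by writing out the low-order volume contribution explicitly in terms of the operators $\fnt{Q}^{\low}$, $\fnt{F}$, $\fnt{\Lambda}$, $\fnt{D}$ and invoking the entropy estimate from~\cite{lin2023positivity}, which is exactly the reduction you describe.
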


\begin{proof}
The solvability follows from the entropy stability of the low order
discretization~\eqref{eq:lowpp}:
\begin{align}\label{eq:solvability}
    \sum\limits_{i=1}^N \LRp{\fnt{v}_i - \fnt{v}_{i+1}}^T \bar{\fnt{f}}_i\LRp{0} = 
    \fnt{v}^T \LRs{- \LRp{\LRp{\fnt{Q}^{\low} - {\fnt{Q}^{\low}}^T} \circ \fnt{F}} \bm{1} + \LRp{\fnt{\Lambda} \circ \fnt{D}}\bm{1}} \leq \fnt{\psi}\LRp{\fnt{u}_{N+1}} - \fnt{\psi}\LRp{\fnt{u}_{1}}
\end{align}

The optimal solution is locally conservative by~\eqref{eq:1Dsubcelllimsol} and~\eqref{eq:1Dsubcellalgfluxboundary}:
\begin{align}\label{eq:localcons}
  \sum\limits_{i=1}^{N+1} \fnt{m}_i \td{\fnt{u}_i}{t} = \bar{\fnt{f}}_{N+1}\LRp{l_{N+1}} - \bar{\fnt{f}}_{0}\LRp{l_0} = \fnt{f}^* \LRp{\fnt{u}_1, \fnt{u}_1^+} - \fnt{f}^* \LRp{\fnt{u}_{N+1}, \fnt{u}_{N+1}^+}.
\end{align}

In addition, the optimal solution
satisfies entropy stability~\eqref{eq:esstatement1D} due to the satisfaction of
the constraint~\eqref{eq:constraintes1D}. The
constraint~\eqref{eq:constraintconvex1D} preserves the given convex constraints
due to convexity.
\end{proof}

\subsubsection{Efficient solution of the linear program}~\label{sec:efficientLP}
The linear program we have formulated can be solved using simplex methods,
although its specific structure allows us to exploit certain advantages. To
simplify the notation, we can represent the linear program~\ref{eq:1DLP} as
follows:
\begin{subequations}
\begin{align}
    \max\limits_{\fnt{x}} \quad & \sum\limits_{i=1}^{M}\fnt{x}_i \\
    \textrm{s.t.}         \quad & \fnt{a}^T \fnt{x} \leq b \\
                                & \fnt{0} \leq \fnt{x} \leq \fnt{U}
\end{align}\label{eq:simpleLP}
\end{subequations}
This type of linear program is known as a continuous knapsack problem, and can
be efficiently solved with the greedy
algorithm~\ref{alg:greedy}~\cite{dantzig1957discrete}. Theorem~\ref{thm:greedy}
concludes this section by showing the optimality of
Algorithm~\ref{alg:greedy}.
\begin{algorithm}[ht!]
    \caption{Greedy algorithm for linear program~\ref{eq:simpleLP}}\label{alg:greedy}
    \KwData{$\fnt{a}, b, \fnt{U} \geq \fnt{0}$}
    \KwResult{$\fnt{x}$ optimal solution of LP}
    $\fnt{x} = \fnt{U}$;\\
    $\mathcal{I} = \LRc{1,\dots,M}$;\\
    \While{$\fnt{a}^T\fnt{x} > b$} {
        $i = \argmax\limits_{j} \ \fnt{a}_j$\\
        $\mathcal{I} = \mathcal{I}\backslash\LRc{i}$;\\
        \eIf{$\sum\limits_{j \in \rzero{\mathcal{I}}} \fnt{a}_j \fnt{x}_j > b$}{
            $\fnt{x}_i = 0.0$;
        }{
            $\fnt{x}_i = \frac{b - \sum\limits_{j \in\mathcal{I}} \fnt{a}_j \fnt{x}_j}{\fnt{a}_i}$;\\
            \bf{break};
        }
    }
\end{algorithm}

\begin{theorem}\label{thm:greedy}
    The greedy algorithm~\eqref{alg:greedy}
    gives the optimal solution to the linear program~\ref{eq:simpleLP}. 
\end{theorem}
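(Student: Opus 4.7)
The plan is to prove this classical continuous-knapsack optimality result via an exchange argument. First I would dispatch the trivial case: if $\fnt{a}^T\fnt{U} \le b$, the while loop is never entered, and the output $\fnt{x}^g = \fnt{U}$ is plainly optimal since every feasible $\fnt{x}$ is bounded componentwise by $\fnt{U}$.

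For the non-trivial case, I would next pin down the structure of the greedy output. Relabel the indices so that $\fnt{a}_{i_1} \ge \fnt{a}_{i_2} \ge \cdots \ge \fnt{a}_{i_M}$, which is the order in which the argmax selects them. Then the algorithm zeros out $\fnt{x}^g_{i_1}, \dots, \fnt{x}^g_{i_{k-1}}$ in the \texttt{if} branch and enters the \texttt{else} branch at index $i_k$, assigning $\fnt{x}^g_{i_k}$ the unique value that makes $\fnt{a}^T\fnt{x}^g = b$, while leaving $\fnt{x}^g_{i_j} = \fnt{U}_{i_j}$ for $j > k$. A crucial observation is that $\fnt{a}_{i_k} > 0$: zeroing $\fnt{x}_{i_k}$ from $\fnt{U}_{i_k}$ dropped $\fnt{a}^T\fnt{x}$ strictly from above $b$ to at most $b$, so $\fnt{a}_{i_k}\fnt{U}_{i_k} > 0$, forcing $\fnt{a}_{i_k} > 0$.

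The heart of the argument is an exchange step. Fix any feasible $\fnt{x}^*$ and set $\bm{\delta} = \fnt{x}^* - \fnt{x}^g$. Feasibility of $\fnt{x}^*$ together with the tightness $\fnt{a}^T\fnt{x}^g = b$ yields $\fnt{a}^T\bm{\delta} \le 0$. The structure of $\fnt{x}^g$ imposes sign constraints on $\bm{\delta}$: $\delta_{i_j} \ge 0$ for $j < k$ (since $\fnt{x}^g_{i_j} = 0$ and $\fnt{x}^* \ge \fnt{0}$) and $\delta_{i_j} \le 0$ for $j > k$ (since $\fnt{x}^g_{i_j} = \fnt{U}_{i_j}$ and $\fnt{x}^* \le \fnt{U}$). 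The sorted ordering then forces the termwise bound $\fnt{a}_{i_j}\delta_{i_j} \ge \fnt{a}_{i_k}\delta_{i_j}$ in every case. Summing and dividing by $\fnt{a}_{i_k} > 0$ gives
\begin{equation*}
0 \ge \fnt{a}^T\bm{\delta} \ge \fnt{a}_{i_k}\sum_j \delta_{i_j} \qquad\Longrightarrow\qquad \sum_i \fnt{x}^*_i \le \sum_i \fnt{x}^g_i,
\end{equation*}
which is the optimality of $\fnt{x}^g$.

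The main obstacle is essentially bookkeeping: correctly identifying where in the sorted order the fractional step occurs and verifying the positivity $\fnt{a}_{i_k} > 0$, which is the crux of the termwise bound. Degenerate endpoint cases (for instance, the loop exhausting all indices without entering the \texttt{else} branch) can be absorbed into the trivial case by noting that solvability, as used in~\eqref{eq:solvability}, guarantees $\fnt{x} = \fnt{0}$ is feasible and hence $b \ge 0$, so the only way the loop can terminate without an \texttt{else} step is if $\fnt{a}^T\fnt{U} \le b$ already.
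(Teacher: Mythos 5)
Your proof is correct, and it takes a genuinely different (and in my view cleaner) route than the paper's. The paper argues by contradiction: it assumes a distinct optimal solution $\fnt{y}$, sorts the coefficients (assuming WLOG strict positivity and distinctness), looks at the smallest index $j$ where $\fnt{y}$ and the greedy output disagree, and splits into three cases depending on whether $j$ survives in the active set and on the sign of the disagreement, constructing a local perturbation in the last case. You instead give a direct one-shot comparison against an arbitrary feasible point: the tightness $\fnt{a}^T\fnt{x}^g = b$ at the fractional index, the sign pattern of $\bm{\delta}$ forced by the greedy structure, and the single termwise inequality $\fnt{a}_{i_j}\delta_{i_j} \ge \fnt{a}_{i_k}\delta_{i_j}$ summed over all indices. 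What your approach buys is the elimination of the case analysis and of the paper's WLOG reductions — in particular you handle nonpositive coefficients and ties without special treatment, and you correctly isolate the one nontrivial fact needed ($\fnt{a}_{i_k} > 0$ at the fractional index) with a clean one-line justification. What the paper's approach buys is that it tracks more explicitly what the greedy iteration does index by index, which some readers may find more concrete; it also matches the classical presentation in Dantzig's paper that the authors cite. Your handling of the degenerate termination case via $b \ge 0$ (from the solvability argument in~\eqref{eq:solvability}) closes a corner the paper's proof leaves implicit, so no gap remains.
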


\begin{proof}
The optimality follows from a contradiction argument. Note that a similar proof
could be found in~\cite{dantzig1957discrete}. Suppose there is an optimal 
solution $\fnt{y} \neq \fnt{x}$, where $\fnt{x}$ is the solution of the greedy algorithm. Since
$\fnt{y}$ is optimal, $\fnt{y}_i = \fnt{U}_i$ for $i: \fnt{a}_i \leq 0$.
Without loss of generality, we assume $a_i > 0$ for all $i$ and
$\fnt{a}_1 > \fnt{a}_2 > \cdots > \fnt{a}_M > 0$. Let $j$ be the smallest index
that $\fnt{x}_j \neq \fnt{y}_j$, and let $\mathcal{I} = \LRc{k, \dots, M}$
be the set after finishing Algorithm~\ref{alg:greedy}. 

\begin{enumerate}
  \item \fbox{If $j \in \mathcal{I}$,} then $\fnt{x}_j = \fnt{U}_j$. By feasibility and
optimality of $\fnt{y}$, $\fnt{y}_j \leq \fnt{U}_j$ and $\fnt{y}_j \geq
\fnt{U}_j$. Then $\fnt{x}_j = \fnt{y}_j$, contradiction.

\item \fbox{If $j \notin \mathcal{I}$ and $\fnt{y}_j < \fnt{x}_j$,}
If $j = k-1$, we know $\fnt{y}_i = \fnt{x}_i = 0$
for $i < j$, and $\fnt{y}_i \leq \fnt{U}_i = \fnt{x}_i$ for $i > j$. This
contradicts to the optimality of $\fnt{y}$. If $j < k-1$, 
$\fnt{y}_{k-1} < \fnt{x}_{k-1} = 0$, $\fnt{y}$ is infeasible.

\item \fbox{If $j \notin \mathcal{I}$ and $\fnt{y}_j > \fnt{x}_j$,}
then by feasibility of $\fnt{y}$, there is $i > j$ s.t.
$\fnt{y}_i < \fnt{x}_i$, otherwise the constraint
$\fnt{a}^T \fnt{y} \leq b$ is not satisfied. We can construct a new
solution $\tilde{\fnt{y}}$, where $\tilde{\fnt{y}}_i = \fnt{y}_i + \epsilon, \tilde{\fnt{y}}_j = \fnt{y}_j - \frac{a_i}{a_j}\epsilon$
for sufficiently small $\epsilon > 0$. $\tilde{\fnt{y}}$ contradicts the
optimality of $\fnt{y}$ since $i > j \implies a_i < a_j$, as a result $\sum \tilde{\fnt{y}} = \sum \fnt{y}$.
\end{enumerate}
\end{proof}

\subsection{Entropy stable subcell limiting strategy in higher dimensions}~\label{sec:subcell2D}
\vspace{-0.5cm}
\subsubsection{Multidimensional subcell limiting in matrix form}~\label{sec:subcellreview}
We now discuss in details the limiting framework in higher dimensions. 
In this section, we reformulate the subcell limiting strategy
discussed in~\cite{rueda2022subcell,pazner2021sparse} in a matrix form, which
facilitates the discussion of the proposed limiting strategy. In 2D, the
DGSEM~\eqref{eq:nodalDG} and the low order updates~\eqref{eq:lowpp} can be
rewritten in an algebraic subcell flux form
\begin{align}
    \fnt{M}\td{\fnt{u}^{\high}}{t} = \sum\limits_{k=1}^2 \fnt{\Delta}_k \bar{\fnt{f}}_k^{\high}, \qquad \fnt{M}\td{\fnt{u}^{\low}}{t} = \sum\limits_{k=1}^2 \fnt{\Delta}_k \bar{\fnt{f}}_k^{\low}. \label{eq:subcellHL}
\end{align}
The difference operators $\fnt{\Delta}_k$ are defined using one dimensional
difference operator $\fnt{\Delta}_{\oned}$ of size $\LRp{N+1}\times\LRp{N+2}$
\begin{align}
    \fnt{\Delta}_{\oned} = \begin{bmatrix}
                    -1 & 1  &   &  &    \\ 
                       & -1 & 1 &  &    \\
                       &    & \ddots & \ddots &    \\
                       &    &   &  &    \\
                       &    &   & -1 & 1\end{bmatrix}
                = \underbrace{\begin{bmatrix}
                    0 & 1  &   &  &    \\ 
                       & -1 & 1 &  &    \\
                       &    & \ddots & \ddots &    \\
                       &    &   &  &    \\
                       &    &   & -1 & 0\end{bmatrix}}_{\fnt{\Delta}^{\vol}_{\oned}}
                   +\underbrace{\begin{bmatrix}
                    -1 &    &   &  &    \\ 
                       &    &   &  &    \\
                       &    & \ddots & &    \\
                       &    &   &  &    \\
                       &    &   &    & 1 \end{bmatrix}}_{\fnt{\Delta}^{\surf}_{\oned}}.\label{eq:deltamatrix}
\end{align}
The difference operator in multidimension is defined using the Kronecker product,
and can be decomposed into volume and surface contributions,
\begin{align}
    \fnt{\Delta}_x &= \fnt{\Delta}_x^{\vol} + \fnt{\Delta}_x^{\surf} = \fnt{I}_{N+1} \kron \fnt{\Delta}^{\vol}_{\oned} + \fnt{I}_{N+1} \kron \fnt{\Delta}^{\surf}_{\oned}\label{eq:diffx}\\
    \fnt{\Delta}_y &= \fnt{\Delta}_y^{\vol} + \fnt{\Delta}_y^{\surf} = \fnt{\Delta}^{\vol}_{\oned} \kron \fnt{I}_{N+1} + \fnt{\Delta}^{\surf}_{\oned} \kron \fnt{I}_{N+1}\label{eq:diffy}
\end{align}

Because the high order and low order methods have the same numerical
fluxes~\eqref{eq:LFflux}, along each dimension, the surface contribution can
be written as
\begin{align}
    \fnt{\Delta}_k^{\surf}\bar{\fnt{f}}^{\high}_k = \fnt{\Delta}_k^{\surf}\bar{\fnt{f}}^{\low}_k
    = -\fnt{E}^T \fnt{B}_k \fnt{f}_k^{*},\label{eq:dsurff}
\end{align}
and the volume contribution can be written as
\begin{align}
    \fnt{\Delta}_k^{\vol}\bar{\fnt{f}}^{\high}_k
    &= - \sum\limits_{k=1}^2\fnt{Q}_k \fnt{f}_k + \fnt{E}^T\fnt{B}_k \vec{f}_k\LRp{\fnt{u}_f} \label{eq:dvolH}\\
    \fnt{\Delta}_k^{\vol}\bar{\fnt{f}}^{\low}_k
    &= - \sum\limits_{k=1}^2\LRp{\LRp{\fnt{Q}_k^{\low} - {\fnt{Q}_k^{\low}}^T}\circ \fnt{F}_k} \bm{1} + \LRp{\fnt{\Lambda}_k \circ \fnt{D}} \bm{1}\label{eq:dvolL}
\end{align}
It should be noted that each DG element has $\LRp{N+1}^2$
nodes, but there are $\left(N+2\right)\left(N+1\right)$ algebraic subcell fluxes
in each dimension. Therefore, equations~\eqref{eq:dvolH} and~\eqref{eq:dvolL}
form an underdetermined system of equations with respect to the algebraic
subcell fluxes. However, it has been shown that
equations~\eqref{eq:dsurff},~\eqref{eq:dvolH}, and~\eqref{eq:dvolL} are
well-defined by utilizing the dimension-by-dimension conservation property of
DGSEM and the low-order solution (e.g., Proposition 8
in~\cite{pazner2021sparse} and~\cite{mateo2023flux}).

Then, the subcell limited solution is constructed as
\begin{align}\label{eq:subcelllimsol}
    \fnt{M}\td{\fnt{u}}{t} = \sum\limits_{k=1}^2 \fnt{\Delta}_k \bar{\fnt{f}}_k,\qquad \bar{\fnt{f}}_{k,ij} = l_{k,ij} \bar{\fnt{f}}^{\high}_{k,ij} + \LRp{1-l_{k,ij}} \bar{\fnt{f}}^{\low}_{k,ij}
\end{align}
where the limited subcell fluxes $\bar{\fnt{f}}_k$ are convex combinations of
low and high order algebraic fluxes. Equivalently, in a matrix notation:
\begin{align}\label{eq:subcelllimsolmatrix}
    \fnt{M}\td{\fnt{u}}{t} = \sum\limits_{k=1}^2 \fnt{\Delta}_k\LRs{\fnt{L}_k \bar{\fnt{f}}^{\high}_k + \LRp{\fnt{I}-\fnt{L}_k} \bar{\fnt{f}}^{\low}_k}, \qquad \fnt{L}_k = \begin{bmatrix} l_{k,11} & & \\ & \ddots & \\ & & l_{k,N+2 N+1} \end{bmatrix}
\end{align}
Figure~\ref{fig:subcellflux} provides an illustration of the components of the
subcell limited solution.

\begin{figure}[!htb]
  \centering
  \includegraphics[width=.4\linewidth]{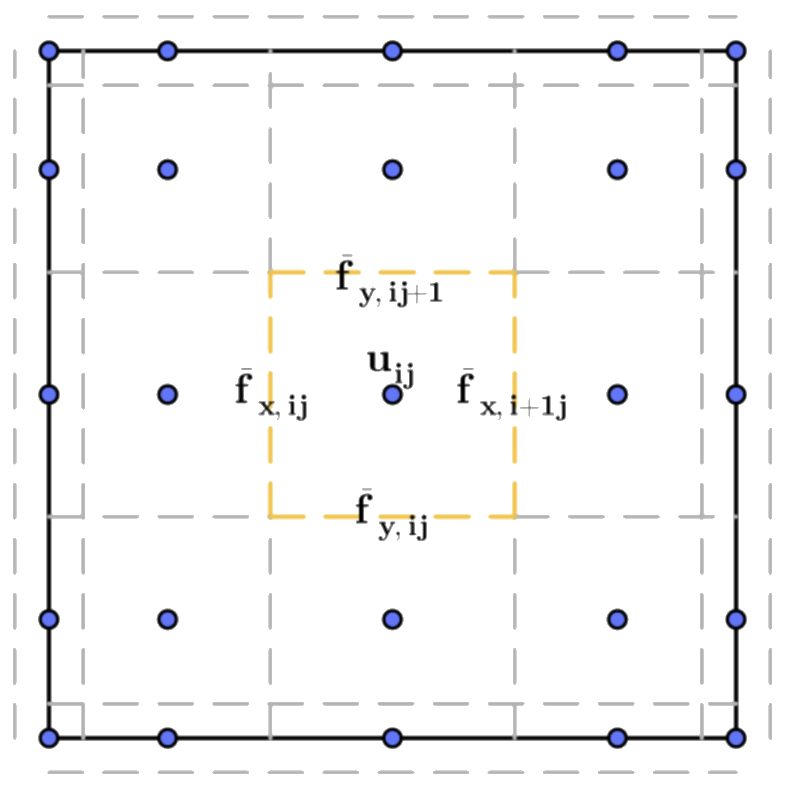}
  \caption{Illustration of algebraic subcell fluxes}\label{fig:subcellflux}
\end{figure}

\subsubsection{Enforcing cell entropy stability in higher dimensions}\label{sec:subcelles}

We proceed in a similar manner as in Section~\ref{sec:subcellcelles} and delve
into the discussion of the proposed subcell limiting strategy in the
multidimensional setting. In this context, the subcell limited solution can be
divided into surface and volume contributions by utilizing the difference
operators~\eqref{eq:diffx} and~\eqref{eq:diffy}. Additionally, the semi-discrete
cell entropy inequality~\cite{chan2018discretely} can also be decomposed into
volume and surface contributions:
\begin{align}
    \fnt{M} \td{\fnt{u}}{t} 
    &= \sum\limits_{k=1}^2 \underbrace{\fnt{\Delta}_k^{\vol} \bar{\fnt{f}}_k}_{\bar{\fnt{q}}^{\vol}_k} + \underbrace{\fnt{\Delta}_k^{\surf} \bar{\fnt{f}}_k}_{\bar{\fnt{q}}^{\surf}_k}, \label{eq:subcell2Dvolsurf}\\
    \fnt{v}^T \fnt{M} \td{\fnt{u}}{t} &\leq \sum\limits_{k=1}^2 \underbrace{\fnt{1}^T \fnt{B}_k \fnt{\psi}_k}_{\fnt{P}^{\vol}_k} - \underbrace{\LRp{\fnt{v}_f}^T \fnt{B}_k \fnt{f}_k^*}_{\fnt{P}^{\surf}_k}.\label{eq:esstatement2D}
\end{align}
In addition to enforce the cell entropy inequality separately on the volume and
surface contributions, we adopt a dimension-by-dimension approach to enforce the
inequality:
\begin{align}\label{eq:2Denforcement}
  \fnt{v}^T\bar{\fnt{q}}^{\vol}_k \leq \fnt{P}^{\vol}_k, \qquad \fnt{v}^T\bar{\fnt{q}}^{\surf}_k \leq -\fnt{P}^{\surf}_k, \qquad k = 1, 2.
\end{align}
Along each dimension $k$, the following identity holds for the surface
contributions by~\eqref{eq:dsurff}:
\begin{align}\label{eq:essurf2D}
  \fnt{v}^T\bar{\fnt{q}}^{\surf}_k = \fnt{v}^T \fnt{\Delta}_k^{\vol} \bar{\fnt{f}}_k = \LRp{\fnt{v}_f}^T \fnt{B}_k \fnt{f}_k^{*} = -\fnt{P}_k^{\surf}.
\end{align}
As a result, the semi-discrete entropy
balance~\eqref{eq:esstatement2D} holds for the subcell limited solution if the
volume contribution satisfies:
\begin{align}\label{eq:essubcellvol}
    \fnt{v}^T\bar{\fnt{q}}^{\vol}_k \leq \fnt{P}^{\vol}_k.
\end{align}
Through algebraic manipulations, we can write the inequality~\eqref{eq:essubcellvol}
explicitly in terms of the subcell limiting factors $\fnt{l}_k$ as unknowns
\begin{align}\label{eq:esineq}
    \fnt{l}_k^T \LRp{\underbrace{\LRp{{\fnt{\Delta}_k^{\vol}}^T \fnt{v}}^T \bar{\fnt{f}}^{\high}_k}_{\fnt{d}_k^{\high}} - \underbrace{\LRp{{\fnt{\Delta}_k^{\vol}}^T \fnt{v}}^T \bar{\fnt{f}}^{\low}_k}_{\fnt{d}_k^{\low}}} + \bm{1}^T\fnt{d}_k^{\low} \leq \fnt{1}^T \fnt{B}_k \fnt{\psi}_k,
\end{align}
Due to the sparsity pattern of $\fnt{\Delta}^{\vol}_k$, the number of unknowns
in the inequalities~\eqref{eq:esineq} can be reduced from $\LRp{N+2}\LRp{N+1}$ to $N\LRp{N+1}$.
In summary, we want to solve for volume subcell limiting factors in each dimension:
$\LRc{l_{x,ij}}_{i=1,\dots,N, j=1,\dots,N+1}$, $\LRc{l_{y,ij}}_{i=1,\dots,N+1,
j=1,\dots,N}$ that satisfies the entropy inequality~\eqref{eq:esineq}:
\begin{align}
    \sum\limits_{j=1}^{N+1} \sum\limits_{i=1}^N \LRp{\fnt{v}_{ij}-\fnt{v}_{i+1 j}}^T \LRp{\bar{\fnt{f}}_{x,i+1 j}^{\high}- \bar{\fnt{f}}_{x,i+1 j}^{\low}} \fnt{l}_{x,i+1j} \leq \bm{1}^T \fnt{B}_x \fnt{\psi}_x - \bm{1}^T\fnt{d}_x^{\low} \label{eq:esx}\\
    \sum\limits_{i=1}^{N+1} \sum\limits_{j=1}^N \LRp{\fnt{v}_{ij}-\fnt{v}_{i j+1}}^T \LRp{\bar{\fnt{f}}_{y,i j+1}^{\high}- \bar{\fnt{f}}_{x,i j+1}^{\low}} \fnt{l}_{y,i+1j} \leq \bm{1}^T \fnt{B}_y \fnt{\psi}_y - \bm{1}^T\fnt{d}_y^{\low} \label{eq:esy}
\end{align}

Let $\fnt{l}_{x}^{\rm C}$ denote the limiting factors that preserve the convex
constraints. Then, the problem of finding suitable subcell
limiting factors can be formulated as linear programs for each dimension $k$.
We present the linear program in x-direction for brevity:
\begin{subequations}
\begin{align}
    \max\limits_{\fnt{l}_{x,ij}} \quad & \sum\limits_{j=1}^{N+1}\sum\limits_{i=2}^{N+1} \fnt{l}_{x,ij} \label{eq:LP2Dobjective}\\
    \textrm{s.t.}                \quad & \sum\limits_{j=1}^{N+1} \sum\limits_{i=1}^N \LRp{\fnt{v}_{ij}-\fnt{v}_{i+1 j}}^T \LRp{\bar{\fnt{f}}_{x,i+1 j}^{\high}- \bar{\fnt{f}}_{x,i+1 j}^{\low}} \fnt{l}_{k,i+1j} \leq \bm{1}^T \fnt{B}_x \fnt{\psi}_x - \bm{1}^T\fnt{d}_x^{\low} \label{eq:LP2Dconstraintes}\\
                                       & \fnt{0} \leq \fnt{l}_{x,ij} \leq \fnt{l}_{x,ij}^{\rm C} \label{eq:LP2Dconstraintconvex}
\end{align}\label{eq:LP2D}
\end{subequations}
The linear program is still of form~\eqref{eq:simpleLP}, and the greedy
algorithm~\ref{alg:greedy} can still be applied. The solution of this linear
program~\eqref{eq:LP2D} satisfies the following properties:

\begin{theorem}\label{thm:LP2D}
    The linear program~\ref{eq:LP2D} is solvable. The optimal solution to the
    linear program is locally conservative,
    satisfies a semi-discrete entropy stability~\eqref{eq:esstatement2D}, and preserves
    the given convex constraints enforced by $l^{\rm C}$.
\end{theorem}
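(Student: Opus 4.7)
The plan is to mirror the argument of Theorem~\ref{thm:subcell}, carrying each of the four claims (solvability, local conservation, entropy stability, preservation of convex constraints) into the multidimensional dimension-by-dimension setting, and leveraging the identities already collected in Section~\ref{sec:subcell2D}.

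First, for solvability of~\eqref{eq:LP2D}, I would exhibit the feasible point $\fnt{l}_x = \fnt{0}$. Setting all limiting factors to zero reduces the entropy constraint~\eqref{eq:LP2Dconstraintes} to $\bm{1}^T \fnt{d}_x^{\low} \leq \bm{1}^T \fnt{B}_x \fnt{\psi}_x$, which is precisely the dimension-wise volume entropy inequality satisfied by the low-order discretization~\eqref{eq:lowpp}; this is the tensor-product/dimension-wise analogue of~\eqref{eq:solvability}, and it follows from the sparse tensor-product structure of $\fnt{Q}_k^{\low}$ together with Tadmor's entropy inequality applied to the local Lax--Friedrichs subcell flux along dimension $k$. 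Since $\fnt{0} \leq \fnt{l}_x^{\rm C}$ by construction, the box constraint~\eqref{eq:LP2Dconstraintconvex} is also satisfied, so the feasible set is nonempty and bounded, and the linear program admits an optimal solution.

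Second, for local conservation, I would sum~\eqref{eq:subcelllimsol} over all nodes. By the column-sum structure of $\fnt{\Delta}_k^{\vol}$, the volume differences telescope to zero, while~\eqref{eq:dsurff} guarantees that the surface contribution reduces to the common numerical flux shared by the high-order and low-order schemes. Thus the net change of $\sum_{ij} \fnt{m}_{ij} \fnt{u}_{ij}$ depends only on the boundary fluxes $\fnt{f}_k^{*}$, independent of the subcell limiting factors $l_{k,ij}$, giving local conservation just as in~\eqref{eq:localcons}. Third, entropy stability follows directly: the surface part of the entropy production matches $-\fnt{P}_k^{\surf}$ by identity~\eqref{eq:essurf2D}, while the volume part $\fnt{v}^T \bar{\fnt{q}}_k^{\vol} \leq \fnt{P}_k^{\vol}$ is exactly the feasibility constraint~\eqref{eq:LP2Dconstraintes}, so summing over $k = 1, 2$ recovers~\eqref{eq:esstatement2D}. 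Fourth, preservation of the convex constraints is immediate from~\eqref{eq:LP2Dconstraintconvex}: each limited subcell flux is a convex combination with coefficient $l_{k,ij} \in [0, l_{k,ij}^{\rm C}]$, and by the definition of $\fnt{l}^{\rm C}$ (see the discussion after~\eqref{eq:constraintconvex1D}) any such choice keeps the updated solution inside the admissible convex set.

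The main obstacle I anticipate is the solvability step, specifically establishing the dimension-wise low-order entropy inequality $\bm{1}^T \fnt{d}_k^{\low} \leq \bm{1}^T \fnt{B}_k \fnt{\psi}_k$ for each $k$ separately. The proof in~\cite{lin2023positivity} bundles both dimensions together, so I would need to argue that the sparse tensor-product construction~\eqref{eq:Qrslow} factorizes the entropy estimate into independent contributions along $r$ and $s$; this reduces to applying the 1D low-order entropy estimate~\eqref{eq:solvability} row-by-row (resp.\ column-by-column) and summing, which is clean once the correct bookkeeping of the LGL weights through the Kronecker structure is in place. The remaining three items are then essentially the 1D proof transcribed in matrix form.
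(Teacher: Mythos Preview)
Your proposal is correct and follows essentially the same route as the paper: feasibility via $\fnt{l}_k = \fnt{0}$ and the dimension-wise low-order entropy inequality, local conservation via the telescoping identity $\bm{1}^T\fnt{\Delta}_k = \bm{1}^T\fnt{\Delta}_k^{\surf}$ together with~\eqref{eq:dsurff}, and the remaining two claims read off directly from the constraints~\eqref{eq:LP2Dconstraintes}--\eqref{eq:LP2Dconstraintconvex}. The obstacle you flag---the \emph{per-dimension} low-order entropy bound $\bm{1}^T\fnt{d}_k^{\low}\leq\bm{1}^T\fnt{B}_k\fnt{\psi}_k$---is exactly what the paper records as~\eqref{eq:lowes}; it is available here because the low-order discretization~\eqref{eq:lowpp} was already written with $\fnt{\Lambda}_k$ split along each dimension (cf.\ the footnote after~\eqref{eq:lowpp}), so the 1D argument indeed applies row-by-row and column-by-column as you anticipate.
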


\begin{proof}
  The solvability follows from the entropy stability of the low order
  discretization:
  \begin{align}\label{eq:lowes}
    \bm{1}^T\fnt{d}_x^{\low} = \LRp{{\fnt{\Delta}_k^{\vol}}^T \fnt{v}}^T \bar{\fnt{f}}^{\low}_k \leq \bm{1}^T \fnt{B}_x \fnt{\psi}_x.
  \end{align}

  The optimal solution is locally conservative by simple algebra:
  \begin{align}\label{eq:subcellcons}
    \fnt{1}^T \fnt{M}\td{\fnt{u}}{t} = \sum\limits_{k=1}^2 \fnt{1}^T \fnt{\Delta}_k \bar{\fnt{f}}_k = \sum\limits_{k=1}^2 \fnt{1}^T \fnt{\Delta}_k^{\surf} \bar{\fnt{f}}_k = -\sum\limits_{k=1}^2\fnt{1}^T \fnt{B}_k \fnt{f}_k^*,
  \end{align}
  where we used the identity $\bm{1}^T \fnt{\Delta} = \bm{1}^T
  \fnt{\Delta}^{\surf}$, along with the assumption~\eqref{eq:LFflux} that high
  and low order methods share the same surface contributions. The semi-discrete
  entropy stability and preservation of convex constraints follows from the
  constraints~\eqref{eq:LP2Dconstraintes} and~\eqref{eq:LP2Dconstraintconvex}.
\end{proof}

\subsection{Incorporating shock capturing}\label{sec:shockcap}

At nonsmooth regions, the volume entropy estimate of the subcell limited
solution is expected to dissipate. To account for this, we propose an
alternative upper bound as a replacement for~\eqref{eq:LP2Dconstraintes} on the
volume entropy estimate of the subcell limited solution. This new bound
combines the original entropy estimate with the dissipative low order entropy
estimate:
\begin{align}\label{eq:esindicatorsubcell}
    \fnt{v}^T \fnt{\Delta}_k^{\vol} \bar{\fnt{f}}_k 
    &\leq \LRp{1-\epsilon} \LRs{\fnt{1}^T \fnt{B}_k \fnt{\psi}_k - \fnt{v}^T \fnt{\Delta}_k^{\vol} \bar{\fnt{f}}_k^{\low}}+ \epsilon \LRs{\LRp{1-\beta} \fnt{1}^T \fnt{B}_k \fnt{\psi}_k + \beta \fnt{v}^T \fnt{\Delta}_k^{\vol} \bar{\fnt{f}}_k^{\low} - \fnt{v}^T \fnt{\Delta}_k^{\vol} \bar{\fnt{f}}_k^{\low}} \nonumber\\
    &= \LRp{1-\beta\epsilon} \LRs{\fnt{1}^T \fnt{B}_k \fnt{\psi}_k - \fnt{v}^T \fnt{\Delta}_k^{\vol} \bar{\fnt{f}}_k^{\low}},
\end{align}
where $\epsilon \in [0,1]$ is an elementwise modal smoothness factor. \rone{In particular,
the smoothness factor is defined as~\cite{pazner2021sparse,hennemann2021provably}:
\begin{align}\label{eq:elemmodalsmooth}
  \epsilon = \begin{cases}
    0, &\text{if } s < s_0 - \kappa\\
    0.5\LRp{1- \sin\LRp{\pi \frac{s-s_0}{2\kappa}}} &\text{if } s \in \LRs{s_0 - \kappa, s_0 + \kappa}\\
    1, &\text{if } s < s_0 + \kappa\end{cases},\quad
    s = \log_{10} \LRp{\max\LRp{\frac{\mu_{N_p}^2}{\sum\limits_{i=0}^{N_p} \mu_i^2}, \frac{\mu_{N_p-1}^2}{\sum\limits_{i=0}^{N_p-1} \mu_i^2}}},
\end{align}
where $s$ is the modal smoothness indicator, and $N_p = \LRp{N+1}\LRp{N+1}$ is the
number of degrees of freedom in an element. We set user-defined parameters
$\kappa = 1, s_0 = \log_{10} \LRp{N^{-4}}$ as in~\cite{pazner2021sparse}. $\LRc{\mu_i}_{i=0}^{N}$ are the modal
coefficients of the polynomial solution in the orthonormal
Legendre basis $\LRc{\widetilde{L}_i\LRp{x}}_{i=0}^{N_p}$, such that
$u\LRp{x} = \sum\limits_{i=0}^N \mu_i \widetilde{L}_i \LRp{x}$.
} $\beta\in [0,1]$ is a \rone{user-defined}
parameter that determines the maximum amount of the low-order entropy estimate to
blend. When $\beta = 0$, the original bound~\eqref{eq:LP2Dconstraintes} is recovered. In smooth regions,
entropy is conserved in the interior of the element. In nonsmooth
regions, the amount of entropy dissipation added is guided by the entropy
dissipation from the low-order discretization.

\rone{
For completeness, we present here the linear program formulation to enforce cell
entropy stability with shock capturing:
\begin{subequations}
\begin{align}
    \max\limits_{\fnt{l}_{x,ij}} \quad & \sum\limits_{j=1}^{N+1}\sum\limits_{i=2}^{N+1} \fnt{l}_{x,ij} \label{eq:LP2Dwshockobjective}\\
    \textrm{s.t.}                \quad & \sum\limits_{j=1}^{N+1} \sum\limits_{i=1}^N \LRp{\fnt{v}_{ij}-\fnt{v}_{i+1 j}}^T \LRp{\bar{\fnt{f}}_{x,i+1 j}^{\high}- \bar{\fnt{f}}_{x,i+1 j}^{\low}} \fnt{l}_{k,i+1j} \leq \LRp{1 - \beta \epsilon}\LRs{\bm{1}^T \fnt{B}_x \fnt{\psi}_x - \bm{1}^T\fnt{d}_x^{\low}} \label{eq:LP2Dwshockconstraintes}\\
                                       & \fnt{0} \leq \fnt{l}_{x,ij} \leq \fnt{l}_{x,ij}^{\rm C} \label{eq:LP2Dwshockconstraintconvex}
\end{align}\label{eq:LP2Dwshockcap}
\end{subequations}

We note that the only difference between the LP formulations~\eqref{eq:LP2D}
and~\eqref{eq:LP2Dwshockcap} is the upper bound of the cell entropy inequality
constraint~\eqref{eq:LP2Dconstraintes} and~\eqref{eq:LP2Dwshockconstraintes}.
} In future sections, we refer to the condition~\eqref{eq:LP2Dwshockconstraintes} with
$\beta = 0$ as the cell entropy inequality condition. A shock capturing cell entropy
stability refers to the same condition with a nonzero value of $\beta$.

\subsection{On implementation}\label{sec:implementation}

The proposed greedy algorithm can be efficiently implemented by first sorting
the coefficient vector $\fnt{a}$, and then proceeding with the greedy iterations
in the sorted order. It can also be proven that for an index $i$ where
$\fnt{a}_i$ is non-positive, the optimal solution is $\fnt{x}_i = \fnt{U}_i$ due
to the non-negativity constraint on $\fnt{x}_i$. Furthermore, the number of
arithmetic operations in Algorithm~\ref{alg:greedy} can be reduced by \rone{iteratively accumulating the dot
product $\fnt{a}^T \fnt{x}$ in each greedy step instead of naively evaluating
the dot product at each step}. Overall,
the number of operations for Algorithm~\ref{alg:efficientgreedy} is
$O\LRp{m\log\LRp{m}}$, where $m = N\LRp{N+1}$. \rone{We note that
calculating the coefficient vector $\fnt{a}$ and the right hand side $b$
involves evaluating the entropy variables and entropy potentials, which may
be nonlinear functions with respect to the conservative variables $\fnt{u}$~\footnote{
For example, for the compressible Euler equations, calculating the entropy
variables involve evaluting a nonlinear transformation~\cite{lin2023positivity}.
The entropy potential is a scalar multiple of the momentum~\eqref{eq:eulerpsi}.
}.}

However, in practice, numerical issues can arise due to floating-point errors.
One of the issues arises due to the division by $\fnt{a}_i$. To avoid this, a
tolerance $\epsilon_0 > 0$, close to machine epsilon, is set, and indices $i$
where $\fnt{a}_i < \epsilon_0$ are skipped during greedy iterations.
\rone{We set $\epsilon_0 = 10^{-14}$ in subsequent numerical experiments.}

\rone{
In summary, Algorithm~\ref{alg:efficientgreedy} presents the efficient and robust
pseudocode implementation of the proposed limiting strategy. We summarize the
proposed subcell limiting strategy that preserves both cell entropy stability and
convex constraints in Algorithm~\ref{alg:eslimiting}:
}
\begin{algorithm}[ht!]
    \caption{Efficient implementation of Algorithm~\ref{alg:greedy}}\label{alg:efficientgreedy}
    \KwData{$\fnt{a}, b, \fnt{U} \geq \fnt{0}, \epsilon_0 > 0$}
    \KwResult{$\fnt{x}$ optimal solution of LP}
    $\fnt{x} = \fnt{U}$;\\
    Sort $\fnt{a}$ in decreasing order, \rzero{and permute $\fnt{x}$ with respect to the order of $\fnt{a}$};\\
    $s = \sum\limits_{j = 1}^{N\LRp{N+1}} \fnt{a}_j \fnt{x}_j$;\\
    \rzero{
    \If{$s \leq b$}{
      \bf{return};
    }
    }
    \For{$i = 1,\dots,N\LRp{N+1}$} {
        \If{$\fnt{a}_i < \epsilon_0$}{
            \bf{break};
        }
        \rzero{
        $s = s - \fnt{a}_i \fnt{x}_i$;\\
        \If{$s \leq b$}{
          \rzero{$\fnt{x}_{i} = \frac{b - s}{\fnt{a}_{i}}$};\\
          \bf{break};
        }
        \Else{
          $\fnt{x}_i = 0.0$;\\
        }
        }
    }
\end{algorithm}
\begin{algorithm}[ht!]
    \caption{The entropy stable and convex constraints preserving limiting strategy}\label{alg:eslimiting}
    \KwData{$\fnt{u}^{n}$ solution at the current time step}
    \KwResult{$\fnt{u}^{n+1}$ limited solution at the next time step}
    Compute $\fnt{u}^{\high}, \fnt{u}^{\low}$ by~\eqref{eq:nodalDG},~\eqref{eq:lowpp};\\
    \For{{\rm each dimension} $k$} {
        Compute algebraic subcell fluxes $\bar{\fnt{f}}_k^{\high}, \bar{\fnt{f}}_k^{\high}$ by~\eqref{eq:dsurff},~\eqref{eq:dvolH},~\eqref{eq:dvolL};\\
        Find limiting parameters $\fnt{l}_k^{\rm C}$ that satisfies the given convex constraints.\\
        Find new limiting parameters $\fnt{l}_k$ that satisfies the entropy stability using Algorithm~\ref{alg:efficientgreedy}.\\
        Construct the limited solution $\fnt{u}^{n+1}$ by~\eqref{eq:subcelllimsol}
    }
\end{algorithm}

\section{Numerical Experiments}~\label{sec:exp}
In this section, we present various numerical experiments to verify the entropy
stability, high-order convergence, and robustness of the proposed limiting
strategy~\footnote{The codes used for the experiments are available at
https://github.com/yiminllin/P2DE.jl}. Simulations advance in time with the optimal
third-order, \rone{three-stage} Strong Stability Preserving (SSP) Runge-Kutta,
\rone{usually referred to as the SSPRK(3,3)}, scheme~\cite{gottlieb2001strong}.
We choose the timestep size according to the timestep condition derived
in~\cite{lin2023positivity}~\footnote{The SSPRK time integrator
and the CFL condition~\eqref{eq:CFL} are necessary only for the preservation of
positivity and not for satisfying the cell entropy inequality. However, for the
sake of consistency, we choose to use this time integrator and CFL condition
across all numerical experiments.}:
\begin{align}\label{eq:CFL}
    \Delta t = \frac{1}{2} \min\limits_{i} \frac{\fnt{m}_i}{2 \lambda_i}.
\end{align}
\rone{We note that the time step condition~\eqref{eq:CFL}
is calculated with the solution at the first SSP stage and is used in all
three SSP stages.}

In this section, two nonlinear conservation laws are studied: the compressible
Euler equation and the KPP problem. To avoid repetition, readers can refer to a
previous manuscript~\cite{lin2023positivity} for the formula of the compressible
Euler equation, and to Section 5.1 of~\cite{rueda2022subcell} for the formula of
the KPP problem.

For the compressible Euler equations presented in~\cite{lin2023positivity}, the
entropy potential is defined as
\begin{align}\label{eq:eulerpsi}
    \vec{\psi}\LRp{\vec{u}} = \LRp{\gamma-1}\begin{bmatrix}\rho u \\ \rho v\end{bmatrix}. 
\end{align}
We estimate the maximum wavespeed associated with the 1D Riemann problems using the
Davis estimate~\cite{davis1988simplified}~\footnote{\rone{We note the Davis
estimate is not always an upper bound of the maximum
wavespeed as shown in~\cite{guermond2016fast}. As a result, the low order
discretization~\eqref{eq:lowpp} using the Davis estimate does not provably satisfy
the positivity-preservation and semi-discrete entropy stability. While we have
not observed issues using Davis estimate in our numerical experiments, we plan 
to explore more robust estimates of the maximum wavespeed~\cite{guermond2016fast}
in future work.}}:
\begin{align}
    \lambda_{\max} \LRp{\vec{u}_L,\vec{u}_R,\vec{n}} = \max\LRp{\left|\vec{u}_L \cdot \vec{n}\right| + \sqrt{\gamma \frac{p_L}{\rho_L}}, \left|\vec{u}_R \cdot \vec{n}\right| + \sqrt{\gamma \frac{p_R}{\rho_R}}}. \label{eq:maxwvspd}
\end{align}

For comparison, we will investigate some other entropy stabilization techniques
for subcell limiting. One popular approach is to enforce a local minimum
principle on a modified specific 
entropy~\cite{pazner2021sparse,rueda2022subcell}:
\begin{align}
    \phi_i \geq \min\limits_{j \in \mathcal{N}\LRp{i}} \phi_j^n, \qquad \phi\LRp{\vec{u}} = \rho^{1-\gamma} e, \label{eq:minentropy}
\end{align}
where $\mathcal{N}\LRp{i}$ denotes the set of indices $j$ such that the sparse
low order operator $\fnt{Q}_{ij}^{\low} - {\fnt{Q}_{ij}^{\low}}^T$ is nonzero.
We also consider a relaxed local minimum entropy
principle~\cite{pazner2021sparse}:
\begin{align}
    \phi_i \geq \epsilon \min\limits_{j \in \mathcal{N}\LRp{i}} \phi_j^n + \LRp{1-\epsilon} \phi_{\rm global}, \label{eq:minentropyrelax}
\end{align}
where $\epsilon \in \LRs{0,1}$ is the elementwise modal indicator. Another
entropy stabilization proposed in~\cite{kuzmin2022limiter} enforced a Tadmor's
entropy condition for subcell fluxes, and we refer this approach as the subcell 
entropy fix. 

\rone{In subsequent numerical experiments, we enforce inflow boundary
conditions by regarding them as Dirichlet boundary conditions, whose values are
given by the initial condition at inflow. Readers can refer to
Section 7.1.1 of~\cite{hesthaven2007nodal} for the implementation of Dirichlet
boundary condition for discontinuous Galerkin discretizations. No boundary
conditions are applied at the outflow.

For all numerical experiments, we assume $\beta = 0$
in~\eqref{eq:esindicatorsubcell} unless otherwise explicitly stated.}

\subsection{On entropy stability}~\label{sec:esexperiment}
In this section, we will investigate the entropy stability of the proposed
limiting strategy using two benchmark test cases to check any entropy violation.

\subsubsection{Modified Sod shocktube}~\label{sec:sod}
We will first consider the modified Sod shock tube
problem, \rone{which can be found in Section 6.4 of~\cite{toro2013riemann}}.
This problem consists of a left sonic
rarefaction wave and is useful for testing whether numerical solutions violate
the entropy condition. An entropy-satisfying solution should produce a smooth
rarefaction wave. The initial condition for this test problem is given on the
domain $\LRs{0,1}$:
\begin{align}
    \vec{u}\LRp{x} = \begin{cases}
        \vec{u}_L, \quad &x < 0.3\\
        \vec{u}_R, \quad &\text{otherwise}
    \end{cases}, \quad \vec{u}_L &= \begin{bmatrix}
    \rho_L \\
    u_L \\
    p_L \\
    \end{bmatrix} = \begin{bmatrix}
    1.0 \\
    0.75 \\
    1.0 \\
    \end{bmatrix},\qquad
    \vec{u}_R = \begin{bmatrix}
    \rho_R \\
    u_R \\
    p_R \\
    \end{bmatrix} = \begin{bmatrix}
    0.125 \\
    0.0 \\
    0.1 \\
    \end{bmatrix}, \label{eq:modifiedsod}
\end{align}
where we impose inflow boundary conditions at $x = 0.0$ and outflow boundary
conditions at $x = 1.0$.

We discretize the domain using $K = 50$ and $K = 100$ uniform intervals, with
polynomial degree $N = 3$, and run the simulation until $T = 0.2$.
Figure~\ref{fig:sod} shows the results obtained using four
different types of discretizations, where ESDG refers to the nodal entropy
stable DG discretizations mentioned in~\cite{lin2023positivity}. We observe that the
DGSEM solution clearly violates the entropy condition, resulting in a jump
discontinuity at the rarefaction wave. On the other hand, the other three
discretizations, including the proposed limiting strategy using cell entropy
inequality, do not violate the entropy condition and produce a smooth
rarefaction wave.

\begin{figure}[!htb]
\centering
\begin{subfigure}{.5\textwidth}
  \centering
  \includegraphics[width=.9\linewidth]{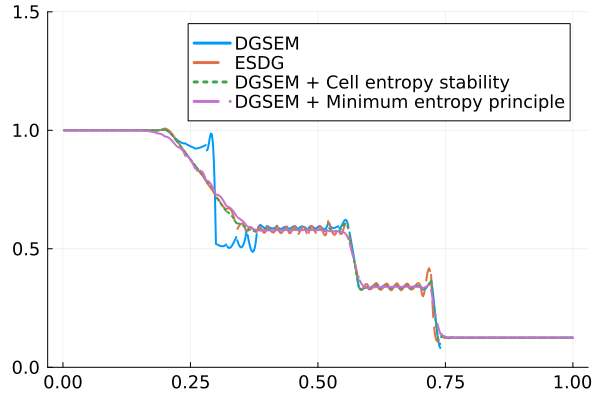}
  \caption{Solutions, $N = 3, K = 50$}
\end{subfigure}%
\begin{subfigure}{.5\textwidth}
  \centering
  \includegraphics[width=.9\linewidth]{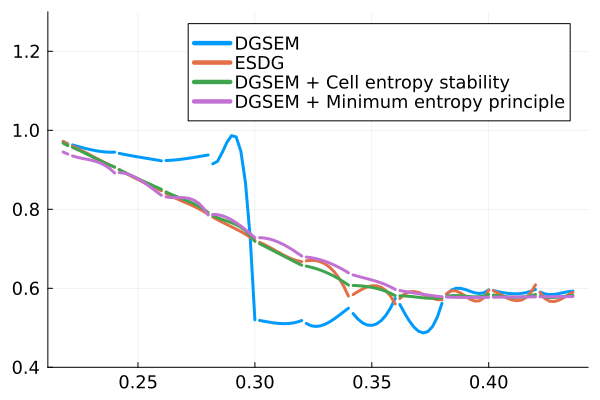}
  \caption{Zoom in view, $N = 3, K = 50$}
\end{subfigure}
\newline
\begin{subfigure}{.5\textwidth}
  \centering
  \includegraphics[width=.9\linewidth]{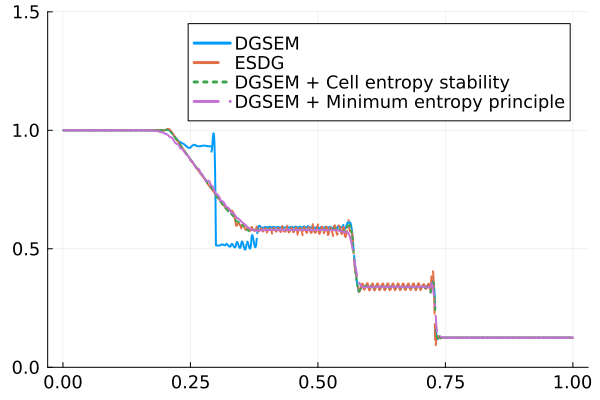}
  \caption{Solutions, $N = 3, K = 100$}
\end{subfigure}%
\begin{subfigure}{.5\textwidth}
  \centering
  \includegraphics[width=.9\linewidth]{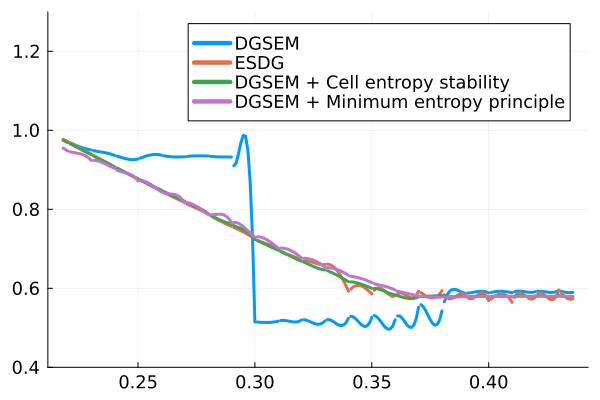}
  \caption{Zoom in view, $N = 3, K = 100$}
\end{subfigure}
\caption{Modified Sod shocktube }\label{fig:sod}
\end{figure}

\subsubsection{2D KPP}~\label{sec:2DKPP}
We next consider the 2D KPP problem~\cite{kurganov2007adaptive}, where typical
high-order methods like DGSEM will converge to non-entropic solutions. We
set up the problem using the same initial condition as
in~\cite{rueda2022subcell}. 
\rone{
We adopt the blending function and parameters in~\cite{hennemann2021provably} for shock
capturing to eliminate oscillations near shocks for the DGSEM discretizations.
In particular, we define the elementwise blending function $\alpha \in \LRs{0,1}$, with
\begin{align}\label{eq:blendingfunc}
  \alpha = \frac{1}{1 + \exp\LRp{\frac{-s}{\tau}\LRp{\epsilon - \tau}}}, \qquad s = \ln\LRp{\frac{1-0.0001}{0.0001}}, \qquad \tau = 0.5 \cdot 10^{-1.8 \LRp{N+1}^{0.25}}
\end{align}
where $\epsilon$ is the modal smoothness factor as defined in~\eqref{eq:elemmodalsmooth},
$s$ is the sharpness factor, and $\tau$ is a threshold value. Then, we apply shock
capturing by upper bounding the subcell limiting factors $\bar{l}_i$, who are
solutions of~\eqref{eq:LP2D}, over each element
by the elementwise blending function~\cite{lin2023positivity}:
\begin{align}\label{eq:shockcapblending}
  l_i = \min \LRp{\bar{l}_i, 1-\alpha},
\end{align}
so that an arbitrary amount of the low-order method may be blended in near
shocks, or when $\alpha$ is near $1$.
}

The domain $\LRs{0,2}\times\LRs{0,2}$ is discretized into uniform quadrilateral
elements by dividing the $x$ and $y$ directions with $K_{\oned}, K_{\oned}$
uniform intervals. We plot the values in the range $\LRs{-0.5,12}$.
Figure~\ref{fig:kpp} shows the solutions
obtained using DGSEM with and without the proposed entropy limiting when $N
= 3$. \rone{We refine the mesh from $K_{\oned} = 64$ to $K_{\oned} = 128$ to
check the convergence behaviour. We observe that DGSEM without the proposed
entropy limiter converges to a non-entropic solution with nonentropic artifacts,
which is similarly observed in~\cite{rueda2022subcell}. On the other
hand, the addition of the proposed cell entropy limiter results in a correct
entropic solution.}

\begin{figure}[!htb]
\centering
\begin{subfigure}{\textwidth}
  \centering
  \includegraphics[width=.7\linewidth]{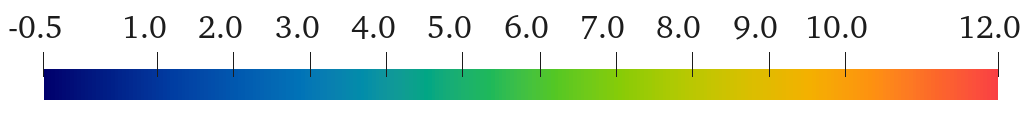}
\end{subfigure}%
\par\bigskip
\centering
\begin{subfigure}{.5\textwidth}
  \centering
  \includegraphics[width=.7\linewidth]{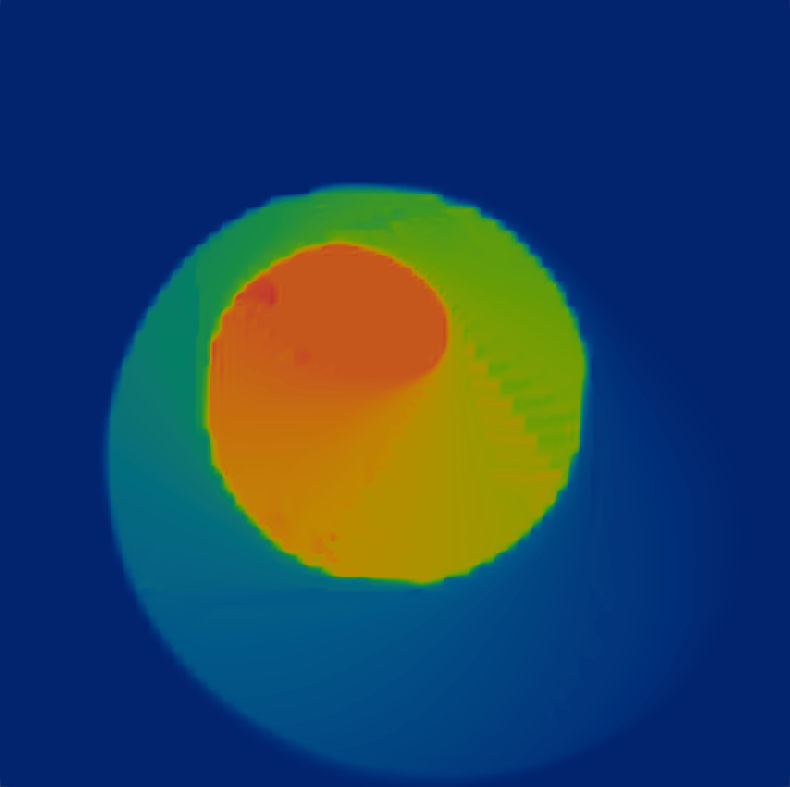}
  \caption{DGSEM,  $N = 3, K = 64$}
\end{subfigure}%
\begin{subfigure}{.5\textwidth}
  \centering
  \includegraphics[width=.7\linewidth]{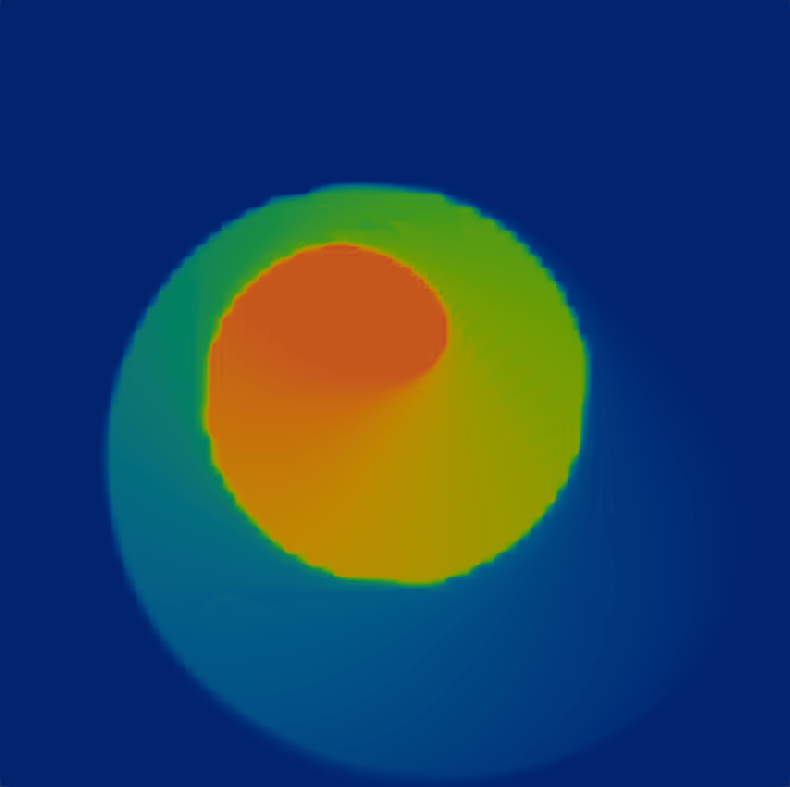}
  \caption{DGSEM with cell entropy inequality, $N = 3, K = 64$}
\end{subfigure}
\newline
\vspace{1cm}
\begin{subfigure}{.5\textwidth}
  \centering
  \includegraphics[width=.7\linewidth]{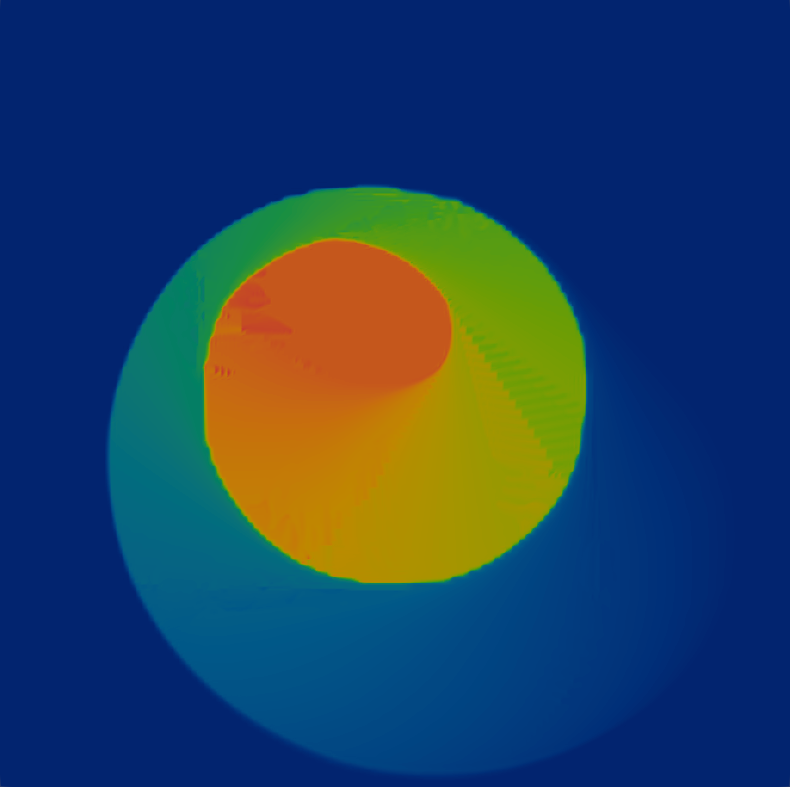}
  \caption{DGSEM, $N = 3, K = 128$}
\end{subfigure}%
\begin{subfigure}{.5\textwidth}
  \centering
  \includegraphics[width=.7\linewidth]{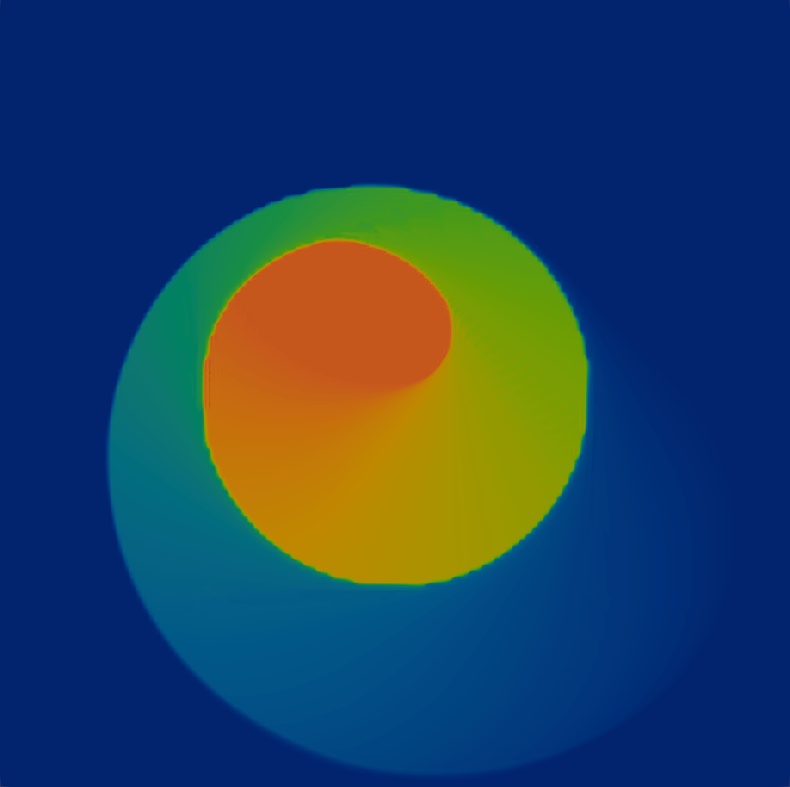}
  \caption{DGSEM with cell entropy inequality, $N = 3, K = 128$}
\end{subfigure}
\caption{2D KPP problem}\label{fig:kpp}
\end{figure}

\subsection{High order accuracy}~\label{sec:highorderexperiment}
\rone{In this section, we verify
the convergence of the subcell limited solution with cell entropy inequality and
no additional convex constraints. In particular, we assume $l^{\rm C} = 1$. }
We examine the convergence of the limited solution using the proposed strategy
in 2D with the isentropic vortex test case for the compressible Euler
equation~\cite{shu1988efficient}. Details of the analytical solution can be
found in Section 8.2.1 of~\cite{lin2023positivity}. The strength of the vortex
is set to $\beta_{\rm vortex} = 5.0$, such that no positivity limiting is needed.
Periodic boundary conditions are imposed, and the simulation is run until $T =
1.0$. The domain $\LRs{0,20}\times\LRs{0,10}$ is
decomposed into uniform quadrilateral elements by discretizing the $x$ and $y$
directions with $2K_{\oned}$ and $K_{\oned}$ uniform intervals, respectively.

We evaluate the relative $L^2$ errors in the conservative variables using
\rone{LGL} quadrature:
\begin{align}
    \frac{\LRs{\fnt{1}^T\fnt{M}\LRp{\vec{\rho}^n -\vec{\rho}}^2}^{1/2}}{\LRs{\fnt{1}^T\fnt{M}\vec{\rho}^2}^{1/2}} + \frac{\LRs{\fnt{1}^T\fnt{M}\LRp{\vec{\rho u}^n -\vec{\rho u}}^2}^{1/2}}{\LRs{\fnt{1}^T\fnt{M}\LRp{\vec{\rho u}}^2}^{1/2}}+\frac{\LRs{\fnt{1}^T\fnt{M}\LRp{\vec{\rho v}^n -\vec{\rho v}}^2}^{1/2}}{\LRs{\fnt{1}^T\fnt{M}\LRp{\vec{\rho v}}^2}^{1/2}} +     \frac{\LRs{\fnt{1}^T\fnt{M}\LRp{\vec{E}^n -\vec{E}}^2}^{1/2}}{\LRs{\fnt{1}^T\fnt{M}\vec{E}^2}^{1/2}} , \label{eq:error}
\end{align}
where the numerical solutions and exact solutions evaluated at quadrature nodes
are denoted by $\vec{u}^n$ and $\vec{u}$ respectively.

Table~\ref{tab:vortexcellES} shows that the subcell limited solution with cell
entropy stability yields an asymptotic convergence rate between $O\LRp{h^{N+1/2}}$
and $O\LRp{h^{N+1}}$. On the other hand, Tables~\ref{tab:vortexminentropy}
and~\ref{tab:vortexrelaxminentropy} show that the minimum entropy
principle-limited solutions have at most an $O\LRp{h}$ convergence rate and at
most an $O\LRp{h^2}$ rate after relaxation for any polynomial order $N$~\footnote{By
private communication, the subcell entropy fix convergence rate reduces to
$O\LRp{h^2}$ for a smooth sine wave test problem~\cite{rueda2023monolithic}}.
\rtwo{We note that the proposed strategy of enforcing cell entropy stability
does not provably preserve high order accuracy for smooth problems. This
section only provides numerical evidence of high-order accuracy.}

\begin{table}[!ht]
\centering
\scalebox{0.88}{
\begin{tabular}{|c|c|c|c|c|c|c|c|c|}
  \hline
 & \multicolumn{2}{c|}{$N=1$} & \multicolumn{2}{c|}{$N=2$} & \multicolumn{2}{c|}{$N=3$} & \multicolumn{2}{c|}{$N=4$}\\
  \hline
 K & $L^2$ error & Rate & $L^2$ error & Rate & $L^2$ error & Rate & $L^2$ error & Rate \\  
  \hline
  5  & $1.183\times 10^{ 0}$ &      & $6.935\times 10^{-1}$ &      & $2.498\times 10^{-1}$ &      & $1.587\times 10^{-1}$ &     \\ 
 10  & $7.722\times 10^{-1}$ & 0.62 & $1.785\times 10^{-1}$ & 1.96 & $7.083\times 10^{-2}$ & 1.82 & $2.000\times 10^{-2}$ & 2.99\\
 20  & $3.327\times 10^{-1}$ & 1.22 & $4.126\times 10^{-2}$ & 1.11 & $8.898\times 10^{-3}$ & 2.99 & $9.557\times 10^{-4}$ & 4.39\\
 40  & $1.118\times 10^{-1}$ & 1.57 & $6.714\times 10^{-3}$ & 2.62 & $8.163\times 10^{-4}$ & 3.45 & $3.142\times 10^{-5}$ & 4.93\\
 80  & $3.133\times 10^{-2}$ & 1.84 & $1.210\times 10^{-3}$ & 2.74 & $4.208\times 10^{-5}$ & 4.28 & $1.530\times 10^{-6}$ & 4.36\\
  \hline
\end{tabular}
}
\caption{2D isentropic vortex, uniform quadrilateral mesh, subcell limiting by enforcing cell entropy inequality}\label{tab:vortexcellES}
\end{table}

\begin{table}[!ht]
\centering
\scalebox{0.88}{
\begin{tabular}{|c|c|c|c|c|c|c|c|c|}
  \hline
 & \multicolumn{2}{c|}{$N=1$} & \multicolumn{2}{c|}{$N=2$} & \multicolumn{2}{c|}{$N=3$} & \multicolumn{2}{c|}{$N=4$}\\
  \hline
 K & $L^2$ error & Rate & $L^2$ error & Rate & $L^2$ error & Rate & $L^2$ error & Rate \\  
  \hline
  5  & $1.084\times 10^{ 0}$ &      & $7.498\times 10^{-1}$ &      & $4.499\times 10^{-1}$ &      & $3.135\times 10^{-1}$ &     \\ 
 10  & $7.012\times 10^{-1}$ & 0.63 & $3.343\times 10^{-1}$ & 1.17 & $2.109\times 10^{-1}$ & 1.09 & $1.486\times 10^{-1}$ & 1.08\\
 20  & $3.373\times 10^{-1}$ & 1.06 & $1.894\times 10^{-1}$ & 0.82 & $1.092\times 10^{-1}$ & 0.95 & $7.509\times 10^{-2}$ & 0.98\\
 40  & $1.841\times 10^{-1}$ & 0.87 & $9.718\times 10^{-2}$ & 0.96 & $5.956\times 10^{-2}$ & 0.87 & $4.160\times 10^{-2}$ & 0.85\\
 80  & $1.015\times 10^{-1}$ & 0.86 & $5.116\times 10^{-2}$ & 0.93 & $3.186\times 10^{-2}$ & 0.90 & $2.157\times 10^{-2}$ & 0.95\\
  \hline
\end{tabular}
}
\caption{2D isentropic vortex, uniform quadrilateral mesh, subcell limiting by enforcing minimum entropy principle}\label{tab:vortexminentropy}
\end{table}

\begin{table}[!ht]
\centering
\scalebox{0.88}{
\begin{tabular}{|c|c|c|c|c|c|c|c|c|}
  \hline
 & \multicolumn{2}{c|}{$N=1$} & \multicolumn{2}{c|}{$N=2$} & \multicolumn{2}{c|}{$N=3$} & \multicolumn{2}{c|}{$N=4$}\\
  \hline
 K & $L^2$ error & Rate & $L^2$ error & Rate & $L^2$ error & Rate & $L^2$ error & Rate \\  
  \hline
  5  & $1.086\times 10^{ 0}$ &      & $7.433\times 10^{-1}$ &      & $4.098\times 10^{-1}$ &      & $2.856\times 10^{-1}$ &     \\ 
 10  & $7.047\times 10^{-1}$ & 0.62 & $2.908\times 10^{-1}$ & 1.35 & $1.785\times 10^{-1}$ & 1.20 & $1.109\times 10^{-1}$ & 1.36\\
 20  & $3.154\times 10^{-1}$ & 1.16 & $1.203\times 10^{-1}$ & 1.27 & $5.464\times 10^{-2}$ & 1.71 & $3.352\times 10^{-2}$ & 1.73\\
 40  & $1.295\times 10^{-1}$ & 1.28 & $4.322\times 10^{-2}$ & 1.48 & $1.863\times 10^{-2}$ & 1.55 & $1.532\times 10^{-2}$ & 1.13\\
 80  & $4.500\times 10^{-2}$ & 1.52 & $1.869\times 10^{-2}$ & 1.21 & $6.614\times 10^{-3}$ & 1.49 & $6.120\times 10^{-3}$ & 1.31\\
  \hline
\end{tabular}
}
\caption{2D isentropic vortex, uniform quadrilateral mesh, subcell limiting by enforcing relaxed minimum entropy principle}\label{tab:vortexrelaxminentropy}
\end{table}


\rone{
\subsection{Kelvin-Helmholtz Instability}\label{sec:kelvinhelmholtz}

We now consider the Kelvin-Helmholtz instability~\cite{chan2022entropy}
to study the behaviour of the proposed limiting strategy in presence of turbulence.
The domain is $\LRs{-1, 1}^2$ with initial condition:
\begin{align}\label{eq:khinitial}
  \vec{u}\LRp{x} = \begin{bmatrix}\rho \\ u \\ v \\ p \end{bmatrix} = \begin{bmatrix} 0.5 + 0.75B\LRp{x,y} \\ 0.5\LRp{B\LRp{x,y}-1} \\ 0.1\sin\LRp{2\pi x} \\ 1\end{bmatrix}, \qquad B\LRp{x,y} = \tanh\LRp{15 y + 7.5} - \tanh\LRp{15 y - 7.5}.
\end{align}

We approximate the solution using degree $N = 3$ polynomials, and the domain is
discretized into $80 \times 80$ uniform quadrilateral elements. We ran the
simulation until $T = 10.0$, and plotted the density in the range of $\LRs{0.5, 2.5}$
in a logarithmic scale for better visibility. For this set of simulations, we 
enforce the cell entropy inequality~\eqref{eq:esstatement2D} and relaxed positivity
conditions on the density and internal energy
\begin{align}\label{eq:relaxedpos}
  \rho\LRp{\fnt{u}_i} \geq 0.5 \rho\LRp{\fnt{u}^{\low}_i}, \qquad \rho e\LRp{\fnt{u}_i} \geq 0.5 \rho e\LRp{\fnt{u}^{\low}_i},
\end{align}
using subcell limiting. 

We follow~\cite{pazner2021sparse,rueda2023monolithic,rueda2022subcell} to enforce
the bounds~\eqref{eq:relaxedpos}. We illustrate the
limiting procedure in 1D for brevity. The subcell limited solution~\eqref{eq:subcelllimsol}
can be rewritten as a convex combination of substates:
\begin{align}\label{eq:subcellsubstate}
  \fnt{u}_{i}^{n+1} = \frac{1}{2}\underbrace{\LRs{\fnt{u}_i^{\low} + l_i \frac{\Delta t}{\fnt{m}_i}\LRp{\bar{\fnt{f}}_i^{\high} - \bar{\fnt{f}}_i^{\low}}}}_{\mathcal{A}_i\LRp{l_i}} + \frac{1}{2}\underbrace{\LRs{\fnt{u}_i^{\low} + l_{i-1} \frac{\Delta t}{\fnt{m}_i}\LRp{\bar{\fnt{f}}_{i-1}^{\high} - \bar{\fnt{f}}_{i-1}^{\low}}}}_{\mathcal{B}_i\LRp{l_{i-1}}},
\end{align}
If the substates satisfy convex constraints, the limited solution will also satisfy
the constraints. Then for each node $i$, we need to find the maximum subcell limiting factor
$l_i^{\rm C}\in \LRs{0,1}$ that satisfies the constraints:
\begin{align}
  \rho\LRp{\mathcal{A}_i\LRp{l_i^{\rm C}}} &\geq 0.5 \rho \LRp{\fnt{u}_i^{\low}}, \qquad \ \ \rho\LRp{\mathcal{B}_{i+1}\LRp{l_i^{\rm C}}} \geq 0.5 \rho \LRp{\fnt{u}_{i+1}^{\low}},\label{eq:posrho}\\
  \rho e\LRp{\mathcal{A}_i\LRp{l_i^{\rm C}}} &\geq 0.5 \rho e \LRp{\fnt{u}_i^{\low}}, \qquad \rho e\LRp{\mathcal{B}_{i+1}\LRp{l_i^{\rm C}}} \geq 0.5 \rho e \LRp{\fnt{u}_{i+1}^{\low}}\label{eq:posrhoe}.
\end{align}
The constraints on density~\eqref{eq:posrho} are linear inequalities with a
closed-form solution, and the constraints on internal energy~\eqref{eq:posrhoe}
can be transformed into quadratic inequalities. Readers can refer
to~\cite{lin2023positivity} for the explicit formula of the limiting factors.

Figure~\ref{fig:kelvin-helmholtz-stddg} shows the proposed limiting strategy
applied to DGSEM. We observe the simulation is robust and resolving fine-scale
turbulence features. Additionally, we apply the proposed limiting strategy to
nodal entropy-stable discontinuous Galerkin (ESDG)\cite{lin2023positivity} for
comparison, as shown in Figure~\ref{fig:kelvin-helmholtz-esdg}. The proposed
limiting preserves the semi-discrete entropy inequality of ESDG discretizations.
Although the same types of constraints are applied through subcell limiting, the
turbulence structures of both solutions are visually different. In other words,
the choice of the high-order scheme in the proposed limiting strategy may have a
significant impact on the solution.

\begin{figure}[h]
\begin{subfigure}{\textwidth}
  \centering
  \includegraphics[width=.5\linewidth]{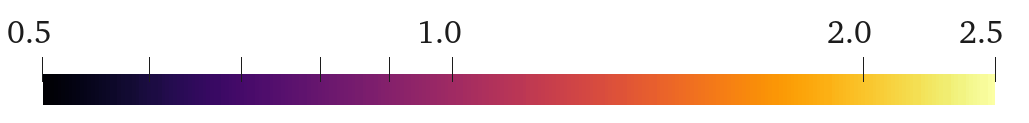}
\end{subfigure}
\par\bigskip
\centering
\begin{subfigure}{.3\textwidth}
  \centering
  \includegraphics[width=.9\linewidth]{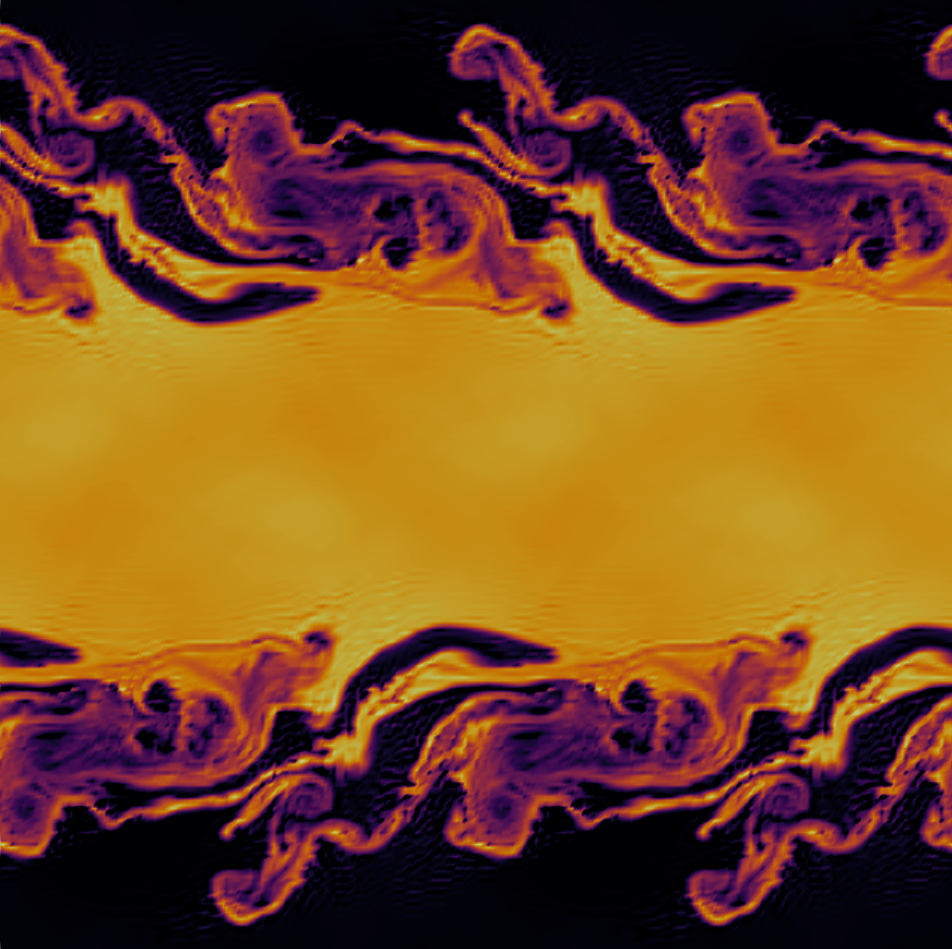}
  \caption{$T = 5.0$}
\end{subfigure}%
\begin{subfigure}{.3\textwidth}
  \centering
  \includegraphics[width=.9\linewidth]{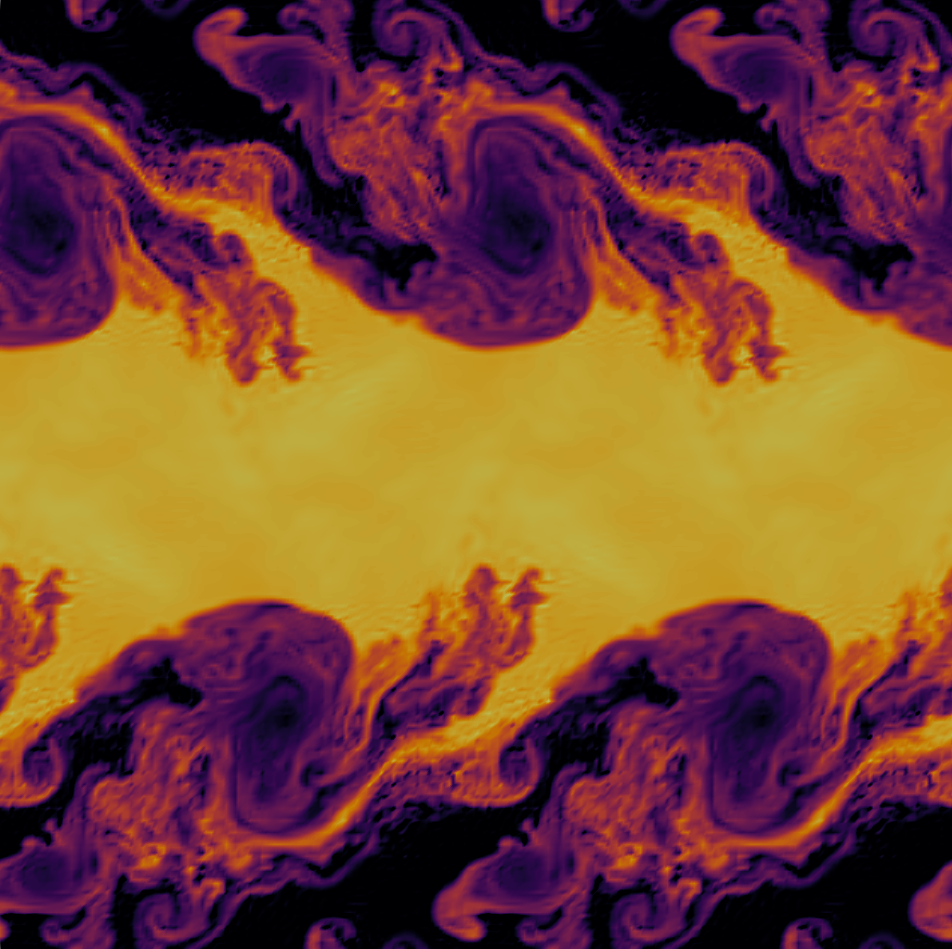}
  \caption{$T = 7.5$}
\end{subfigure}%
\begin{subfigure}{.3\textwidth}
  \centering
  \includegraphics[width=.9\linewidth]{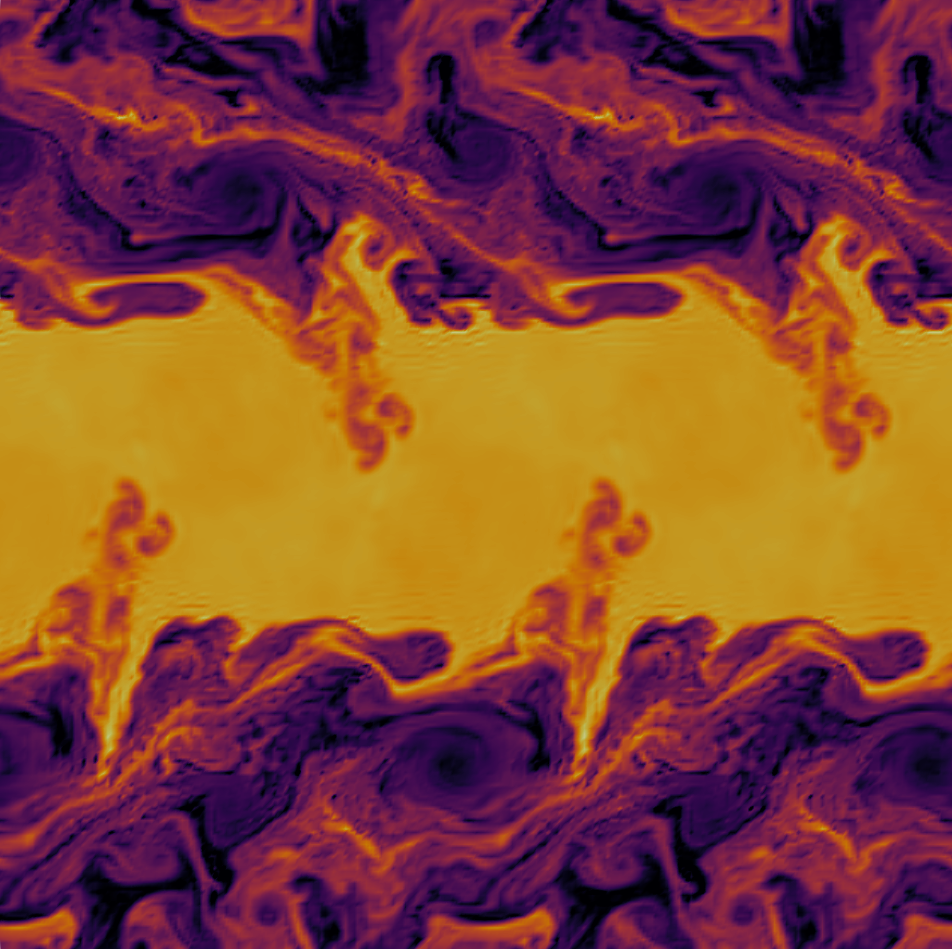}
  \caption{$T = 10.0$}
\end{subfigure}%
\caption{Kelvin-Helmholtz Instability, DGSEM limited with relaxed positivity constraints~\eqref{eq:relaxedpos} and the cell entropy inequality~\eqref{eq:esstatement2D} using subcell limiter}\label{fig:kelvin-helmholtz-stddg}
\end{figure}

\begin{figure}[h]
\begin{subfigure}{\textwidth}
  \centering
  \includegraphics[width=.5\linewidth]{fig/kh-bar.png}
\end{subfigure}
\par\bigskip
\centering
\begin{subfigure}{.3\textwidth}
  \centering
  \includegraphics[width=.9\linewidth]{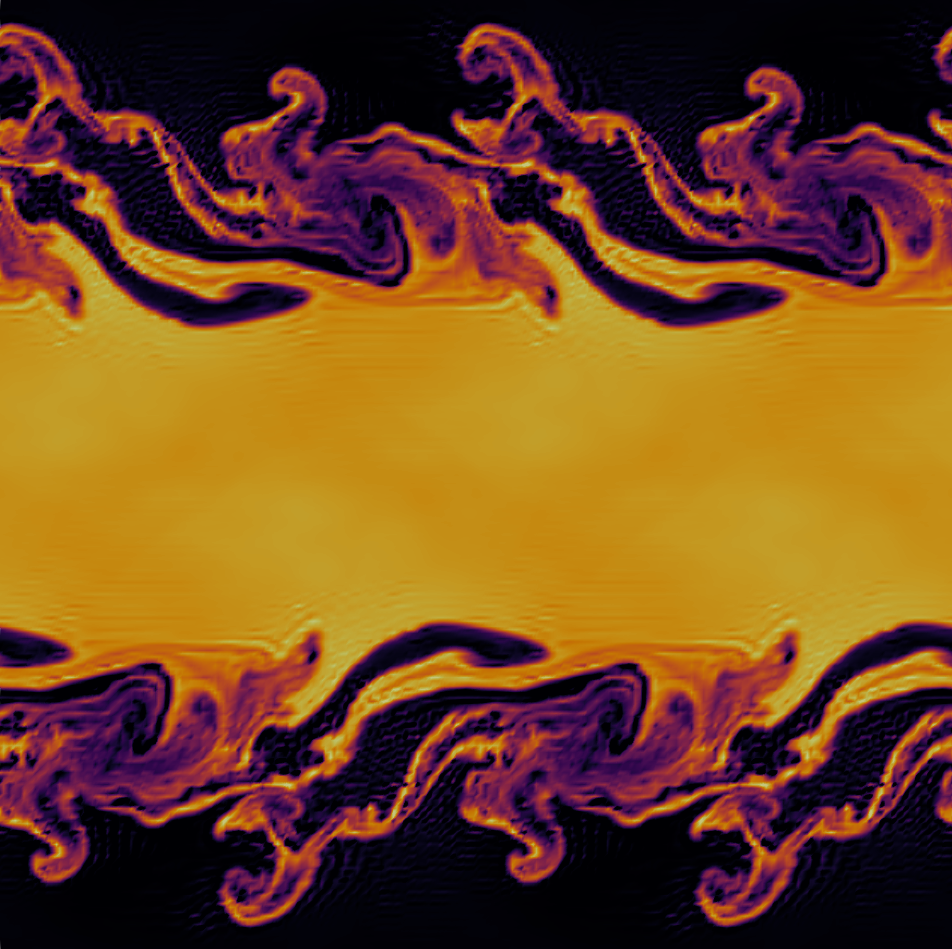}
  \caption{$T = 5.0$}
\end{subfigure}%
\begin{subfigure}{.3\textwidth}
  \centering
  \includegraphics[width=.9\linewidth]{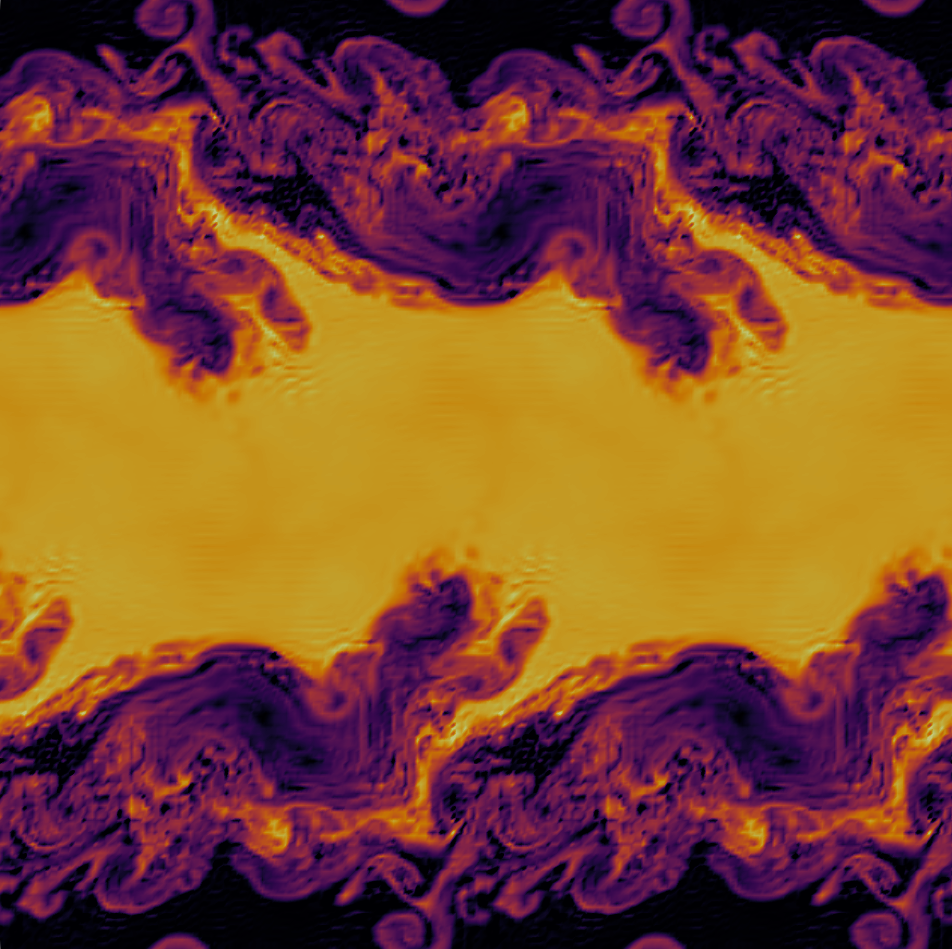}
  \caption{$T = 7.5$}
\end{subfigure}%
\begin{subfigure}{.3\textwidth}
  \centering
  \includegraphics[width=.9\linewidth]{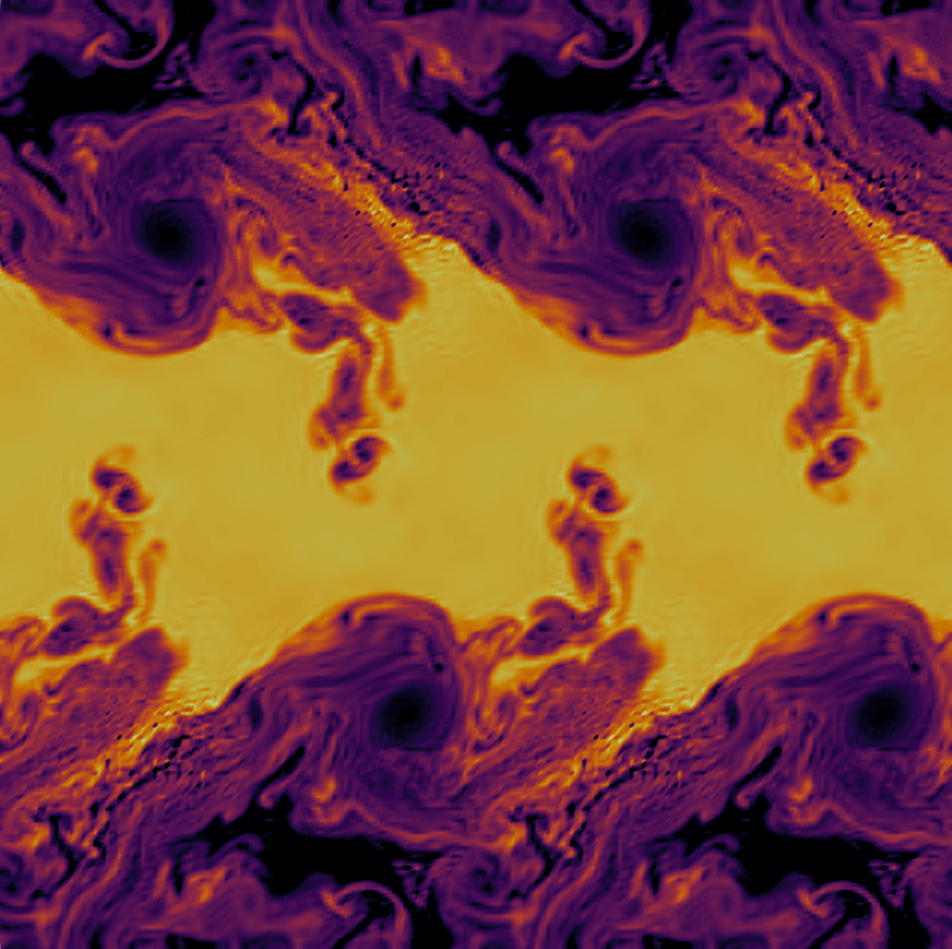}
  \caption{$T = 10.0$}
\end{subfigure}%
\caption{Kelvin-Helmholtz Instability, ESDG limited with relaxed positivity constraints~\eqref{eq:relaxedpos} and the cell entropy inequality~\eqref{eq:esstatement2D} using subcell limiter}\label{fig:kelvin-helmholtz-esdg}
\end{figure}

}

\subsection{Astrophysical jet}~\label{sec:astrojet}

We conclude the experiments by running the astrophysical jet test case proposed
by Ha et al.\cite{ha2005numerical} for the compressible Euler equation. This
test case involves a high Mach number of ${\rm Ma} \approx 2000$ and strong
shocks, making it ideal for testing the robustness of numerical
schemes\cite{rueda2023monolithic,zhang2010positivity}. The domain is
$\LRs{-0.5,0.5}^2$ initialized with a resting gas, and an
inflow boundary condition is imposed on the left of the domain.
\begin{align}
  \textrm{Initial: } \vec{u} = \begin{bmatrix}\rho\\ u \\ v \\ p\end{bmatrix} = \begin{bmatrix}0.5 \\ 0.0 \\ 0.0 \\ 0.4127\end{bmatrix}, \qquad \textrm{Inflow: } \vec{u}\LRp{-0.5,y} = \begin{bmatrix}\rho\\ u \\ v \\ p\end{bmatrix} = \begin{bmatrix}5.0 \\ 800.0 \\ 0.0 \\ 0.4127\end{bmatrix} \textrm{ for } y \in \LRs{-0.05,0.05}. \label{eq:astro-jet-ic}
\end{align}

We approximate the solution using degree $N = 3$ polynomials, and the domain was
discretized into $150\times 150$ uniform quadrilateral elements, as constructed
in Section~\ref{sec:2DKPP}. We plotted the density in the range of
$\LRs{0.01,30.0}$ in a logarithmic scale for better visibility. All simulations
were run until the end time of $T = 0.001$.

We first consider enforcing a TVD-like bound on density that
depends on the low order update
\begin{align}\label{eq:TVDbound}
  \rho\LRp{\fnt{u}_i} \in \LRs{\rho_i^{\min}, \rho_i^{\max}}, \qquad \rho_i^{\min} = \min\limits_{j \in \mathcal{N}\LRp{i}} \rho\LRp{\fnt{u}_i^{\low}} , \qquad \rho_i^{\max} = \max\limits_{j \in \mathcal{N}\LRp{i}} \rho\LRp{\fnt{u}_i^{\low}},
\end{align}
along with a relaxed positivity condition on the internal energy~\eqref{eq:relaxedpos} 
and the proposed cell entropy inequality~\eqref{eq:LP2Dwshockconstraintes}.
\rone{We follow Section~\ref{sec:kelvinhelmholtz} to enforce
the bounds~\eqref{eq:TVDbound} and~\eqref{eq:relaxedpos} with subcell limiting. Instead of the
relaxed positivity bound~\eqref{eq:posrho}, we enforce a TVD-like bound on density:
\begin{align}
  \rho\LRp{\mathcal{A}_i\LRp{l_i^{\rm C}}} &\in \LRs{\rho_i^{\min}, \rho_i^{\max}}, \qquad \rho\LRp{\mathcal{B}_{i+1}\LRp{l_i^{\rm C}}} \in \LRs{\rho_{i+1}^{\min}, \rho_{i+1}^{\max}},\label{eq:TVDrho}
\end{align}
which is a linear constraint with a closed form solution of $l_i^C$. The
constraints on density~\eqref{eq:TVDrho} and internal energy~\eqref{eq:posrhoe}
can be satisfied under explicit formulas in~\cite{lin2023positivity}.} Figure~\ref{fig:astrojet-tvd} shows
that the simulation remain robust in the presence of strong shocks.

Next, we study the effect of the relaxation factor $\beta$ in the cell entropy
inequality~\eqref{eq:esindicatorsubcell}. For this set of simulations, we only
enfroce relaxed positivity conditions on the density and internal energy~\eqref{eq:relaxedpos}
in addition to the shock capturing cell entropy inequality with different relaxation
factors $\beta$. Figure~\ref{fig:astrojet-dgsem-pos} compares the results with
$\beta = 1.0, 0.1, 0.01,$ and $0.0$. In other words, the results have at most
$100\%, 10\%, 1\%,$ and $0\%$ low-order cell entropy dissipation blended into the
enforced cell entropy bound.

As the relaxation factor $\beta$ decreases, finer-scale features are resolved
due to reduced numerical dissipation. However, we observe the appearance of the
carbuncle effect when $\beta = 0.01$ and $\beta = 0.0$, and similarly when using
the minimum entropy principle instead of the shock capturing cell entropy inequality, as
shown in Figure~\ref{fig:astrojet-min}. This suggests that the carbuncle effect
is not due only to a loss of entropy stability, but rather a lack of sufficient dissipation near
shocks. The qualitative behavior of the flow changes significantly with
different values of $\beta$, indicating a high sensitivity of the limited
solution to the choice of $\beta$.

\rone{
The final comparison is between
\begin{enumerate}
  \item The DGSEM discretization limited with the relaxed positivity bound~\eqref{eq:relaxedpos}
  and cell entropy inequality~\eqref{eq:esstatement2D} using a subcell limiter, as shown in Figure~\ref{fig:astrojet-dgsem-pos} when $\beta = 0$.
  \item The ESDG discretization limited with the relaxed positivity
bound~\eqref{eq:relaxedpos} using an elementwise Zhang-Shu limiter, as shown in Figure~\ref{fig:astrojet-esdg-zhangshu}.
  \item The ESDG discretization limited with the relaxed positivity bound and cell entropy
inequality~\eqref{eq:esstatement2D} using a subcell limiter, as shown in Figure~\ref{fig:astrojet-esdg-subcell}.
\end{enumerate}
All solutions satisfy the same relaxed positivity constraints and the cell
entropy stability~\cite{lin2023positivity}. The main difference is the limited
high-order scheme, DGSEM or ESDG, and the types of limiting, Zhang-Shu or
subcell, used. It is observed that the Zhang-Shu limited ESDG solution does not
exhibit the carbuncle effect that is present in the subcell limited solution
enforcing the cell entropy inequality. This can be partially explained by the
fact that the Zhang-Shu limiter is more dissipative than the subcell limiter,
and this numerical dissipation is sufficient to eliminate the carbuncle near the
shock in this particular test case.

Comparing Figure~\ref{fig:astrojet-dgsem-pos} when $\beta = 0$ and
Figure~\ref{fig:astrojet-esdg-subcell}, we observe different turbulence
structures when the proposed subcell limiting strategy is applied to DGSEM and
ESDG, respectively. This further supports our observation in
Section~\ref{sec:kelvinhelmholtz} that the choice of a high-order scheme in the
proposed limiting strategy may have a significant impact on the solution. }

\begin{figure}[!h]
\begin{subfigure}{\textwidth}
  \centering
  \includegraphics[width=.44\linewidth]{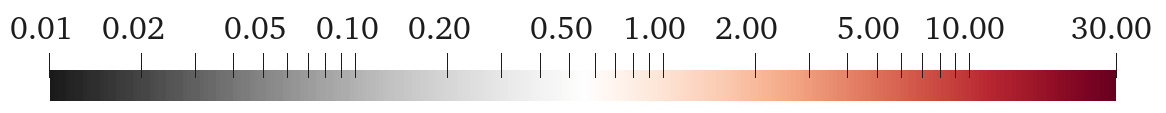}
\end{subfigure}
\par\medskip
\centering
\begin{subfigure}{\textwidth}
  \centering
  \includegraphics[width=.4\linewidth]{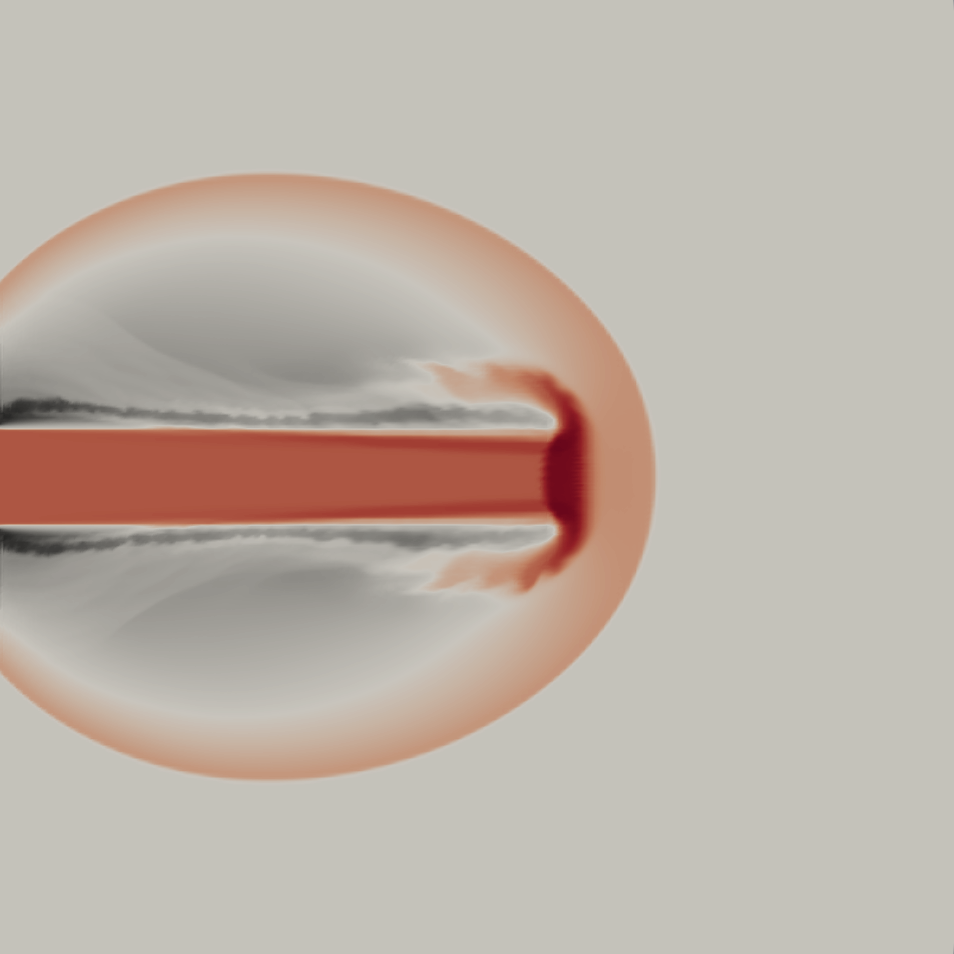}
\end{subfigure}
\caption{Astrophysical jet, DGSEM limited with TVD-like constraints~\eqref{eq:TVDrho},~\eqref{eq:posrhoe} and the cell entropy inequality~\eqref{eq:esstatement2D}}~\label{fig:astrojet-tvd}
\end{figure}

\begin{figure}[!h]
\begin{subfigure}{\textwidth}
  \centering
  \includegraphics[width=.7\linewidth]{fig/astro-bar-horizontal.png}
\end{subfigure}
\par\bigskip
\centering
\begin{subfigure}{.5\textwidth}
  \centering
  \includegraphics[width=.8\linewidth]{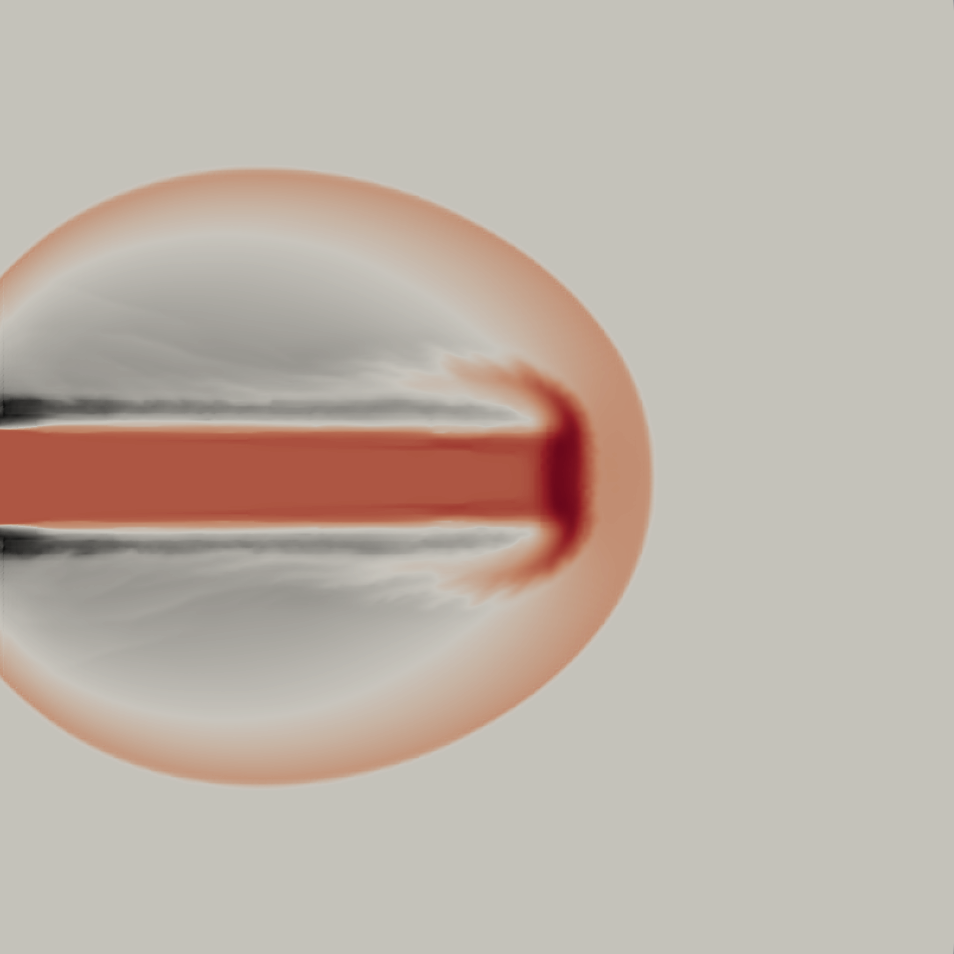}
  \caption{$\beta = 1.0$}
\end{subfigure}%
\begin{subfigure}{.5\textwidth}
  \centering
  \includegraphics[width=.8\linewidth]{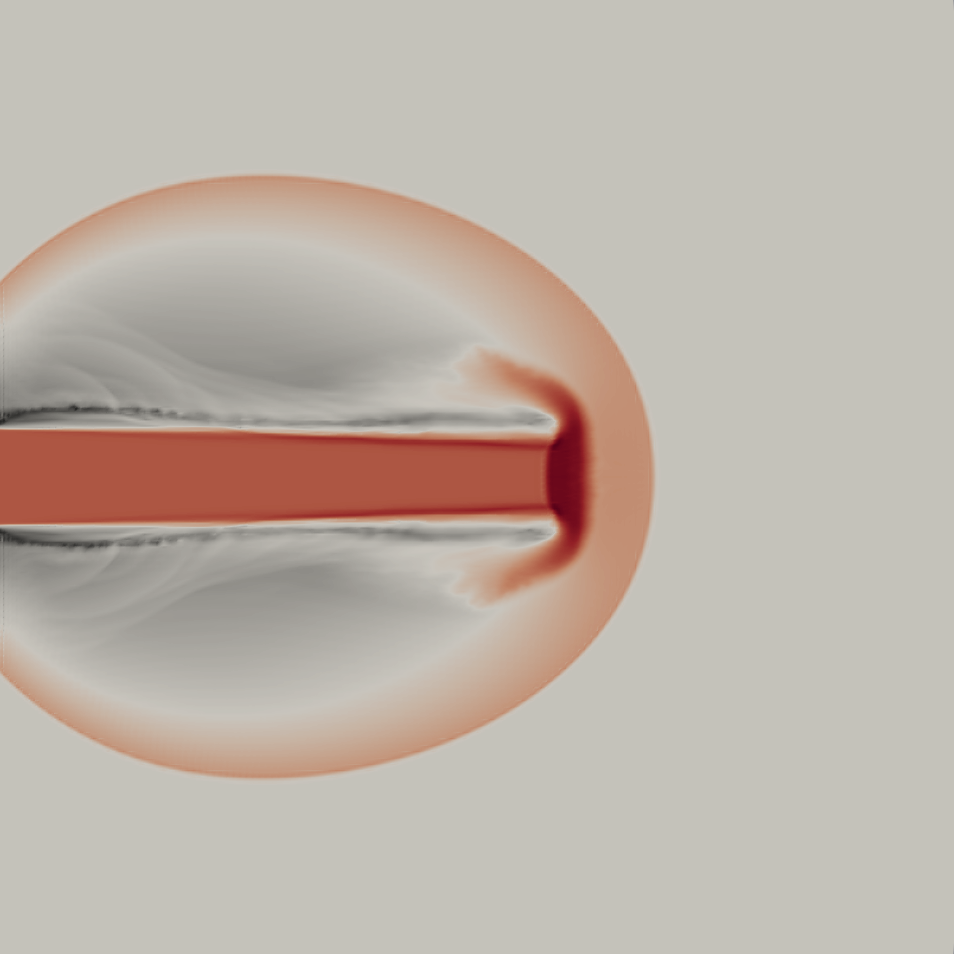}
  \caption{$\beta = 0.1$}
\end{subfigure}%
\newline
\vspace{1cm}
\begin{subfigure}{.5\textwidth}
  \centering
  \includegraphics[width=.8\linewidth]{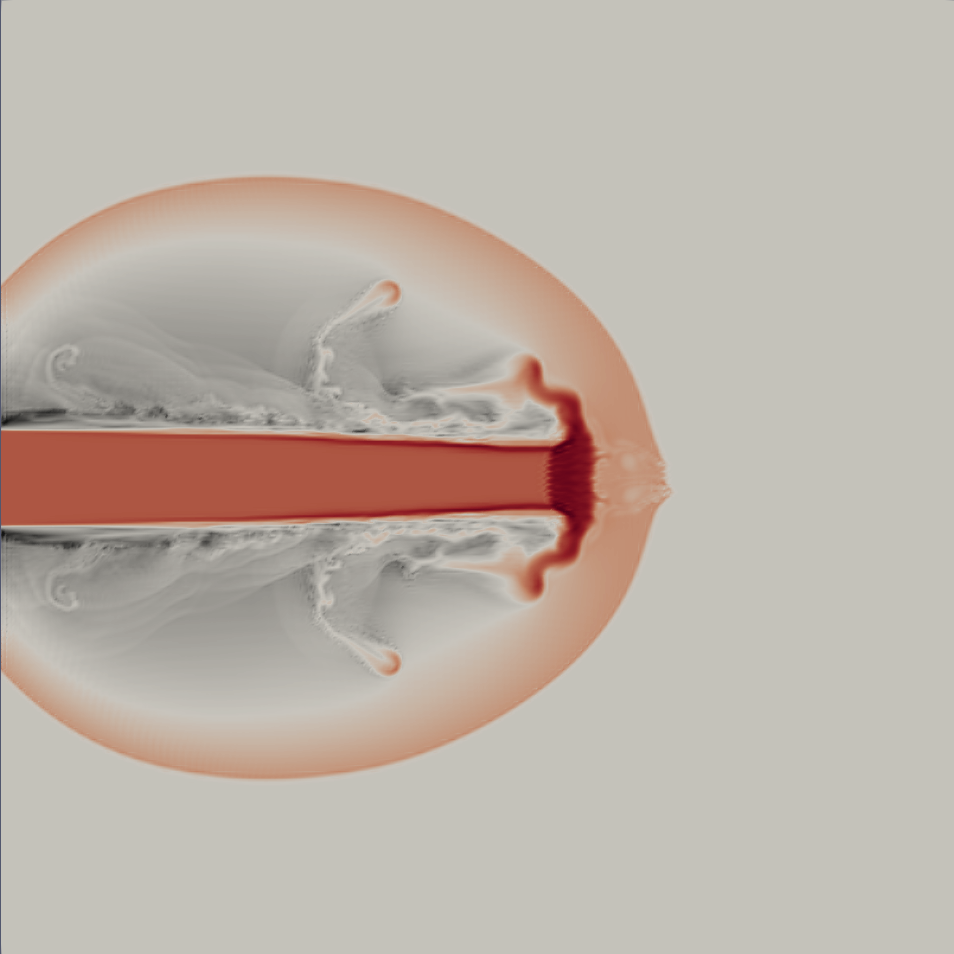}
  \caption{$\beta = 0.01$}
\end{subfigure}%
\begin{subfigure}{.5\textwidth}
  \centering
  \includegraphics[width=.8\linewidth]{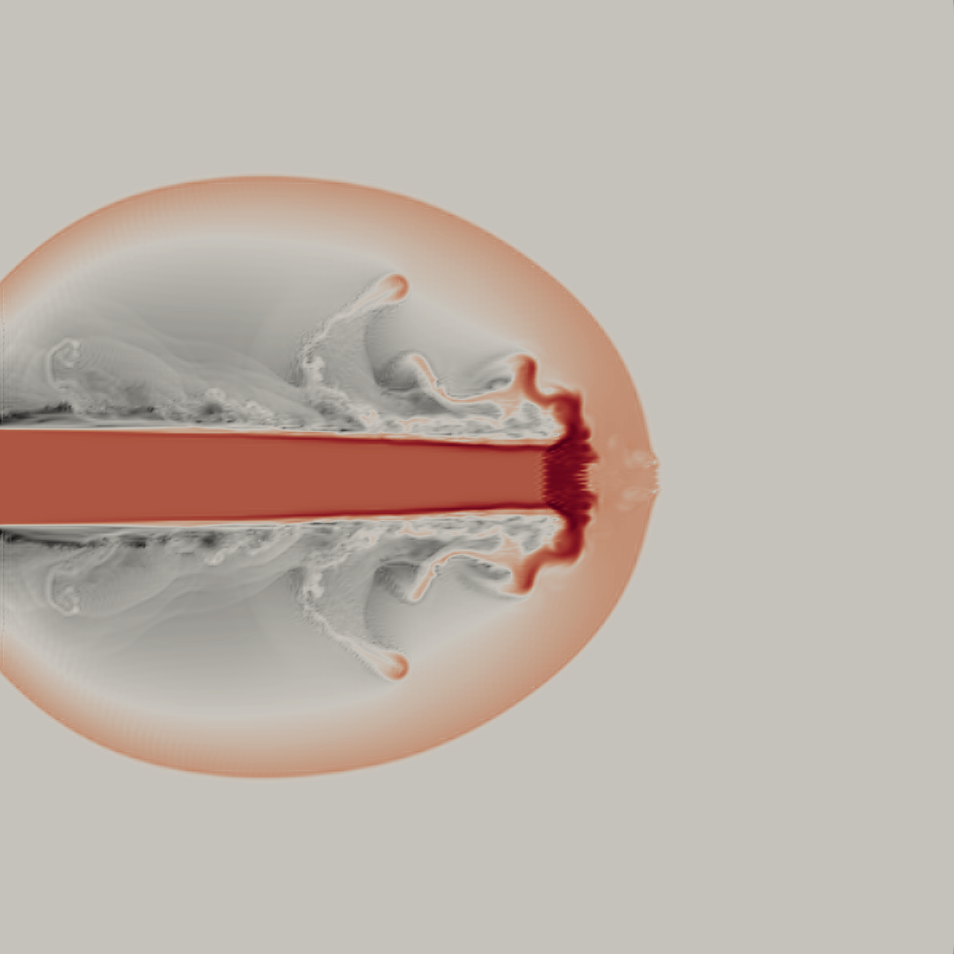}
  \caption{$\beta = 0.0$}
\end{subfigure}%
\caption{Astrophysical jet, DGSEM limited with relaxed positivity constraints~\eqref{eq:relaxedpos} and the shock capturing cell entropy inequality~\eqref{eq:esindicatorsubcell}}\label{fig:astrojet-dgsem-pos}
\end{figure}

\section{Conclusion}~\label{sec:conclusion} 
In this paper, we introduce a novel subcell limiting strategy to ensure
semi-discrete entropy stability. We formulate the limiting procedure as a linear
program that can be solved efficiently using a deterministic greedy algorithm.
The resulting subcell limited solution is high order accurate and
semi-discretely entropy stable, and can be combined with other subcell limiters
to preserve general convex constraints.

\section*{Acknowledgement}
\rzero{The authors thank Sebastian Perez-Salazar for pointing out the connection to the continuous
knapsack problem.} Yimin Lin and Jesse Chan gratefully acknowledge support from
National Science Foundation under award DMS-CAREER-1943186 \rzero{and DMS-2231482}.

\appendix

\section{Astrophysical jets}




\begin{figure}[h]
\begin{subfigure}{\textwidth}
  \centering
  \includegraphics[width=.5\linewidth]{fig/astro-bar-horizontal.png}
\end{subfigure}
\par\bigskip
\centering
\begin{subfigure}{.3\textwidth}
  \centering
  \includegraphics[width=.9\linewidth]{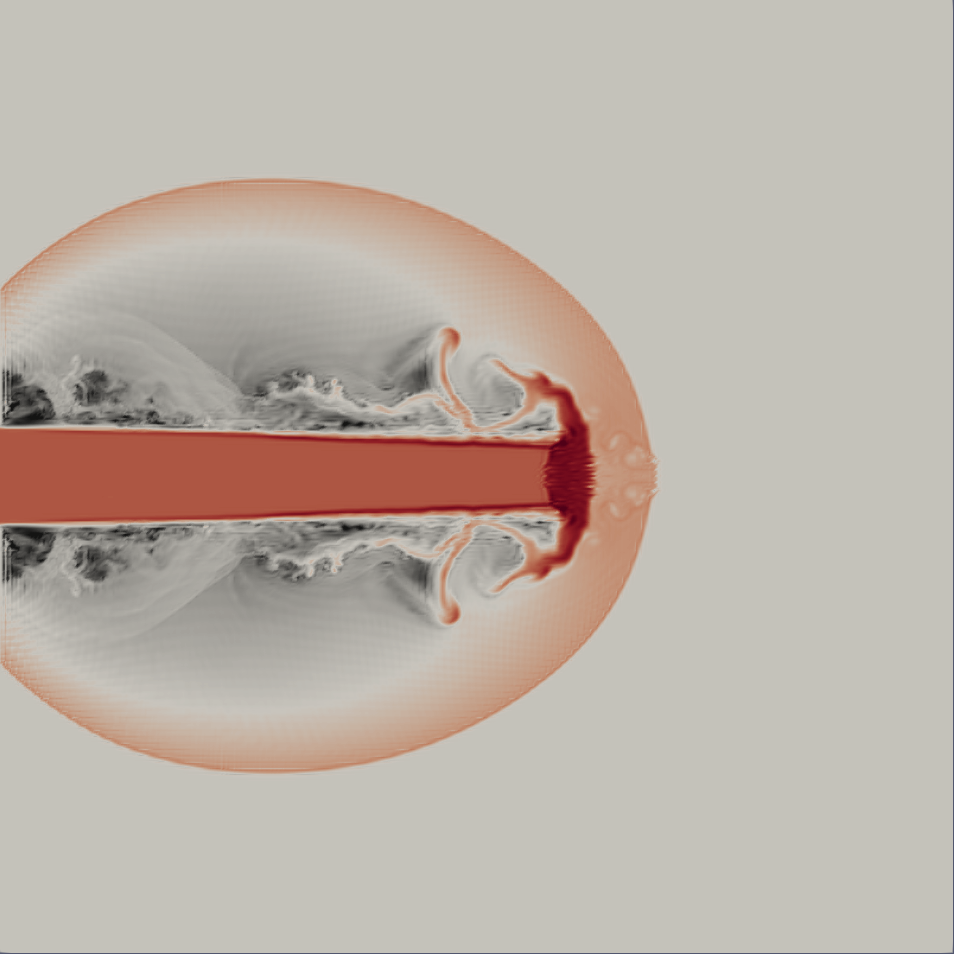}
  \caption{(a) ESDG limited with relaxed positivity constraint~\eqref{eq:relaxedpos} and cell entropy inequality~\eqref{eq:esstatement2D} using subcell limiter}\label{fig:astrojet-esdg-subcell}
\end{subfigure}\hfill%
\begin{subfigure}{.3\textwidth}
  \centering
  \includegraphics[width=.9\linewidth]{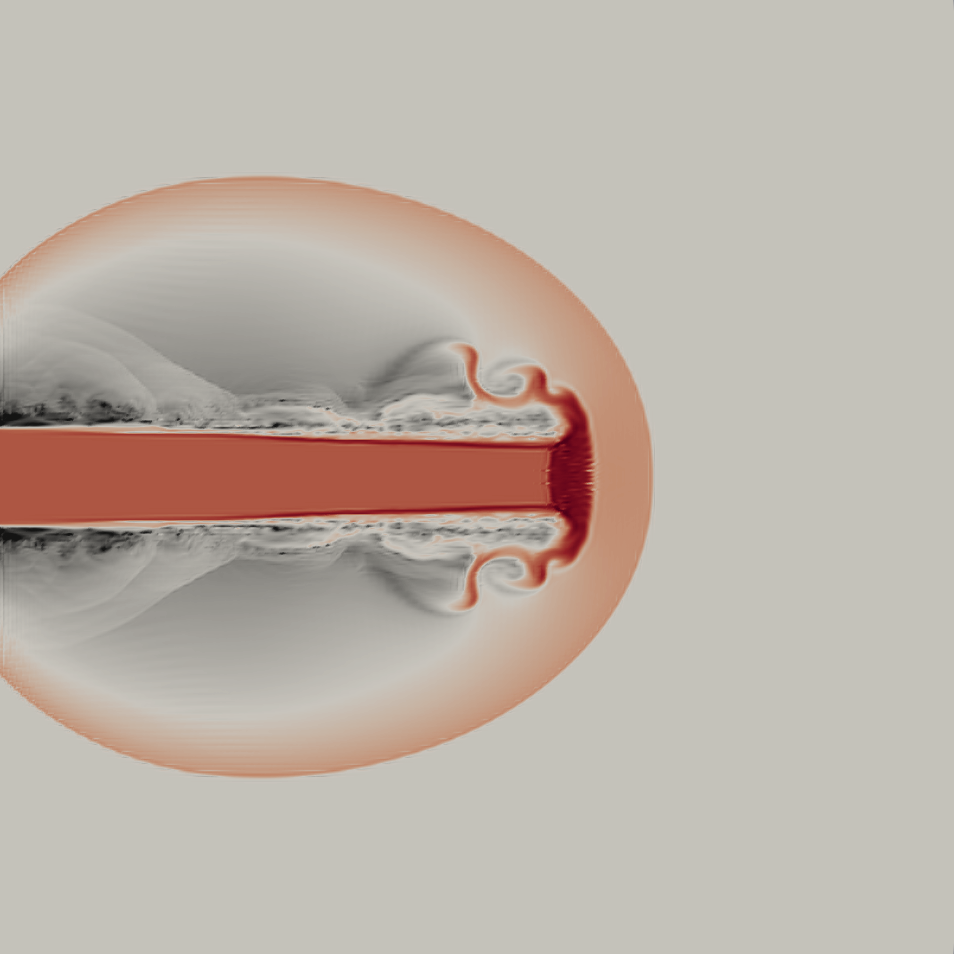}
  \caption{(b) ESDG limited with relaxed positivity constraint~\eqref{eq:relaxedpos} using Zhang-Shu limiter~\cite{lin2023positivity}}\label{fig:astrojet-esdg-zhangshu}
\end{subfigure}\hfill%
\begin{subfigure}{.3\textwidth}
  \centering
  \includegraphics[width=.9\linewidth]{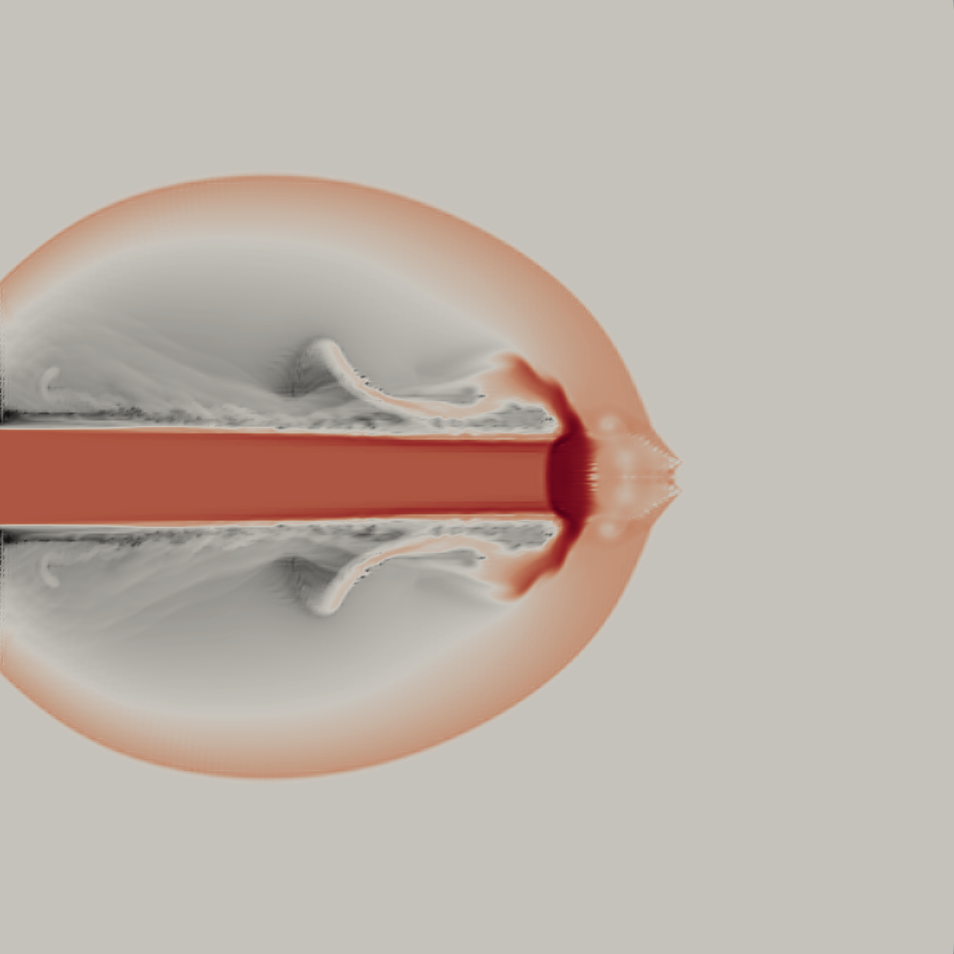}
  \caption{(c) DGSEM limited with relaxed positivity constraint~\eqref{eq:relaxedpos} and minimum entropy principle~\eqref{eq:minentropy} using subcell limiter}\label{fig:astrojet-min}
\end{subfigure}%
\caption{Astrophysical jet}\label{fig:astro-jet-appendix}
\end{figure}


\newpage
\bibliographystyle{elsarticle-num}
\bibliography{reference.bib}

\end{document}